\documentclass[12pt]{article}
\oddsidemargin 0 mm
\topmargin -10 mm
\headheight 0 mm
\headsep 0 mm 
\textheight 246.2 mm
\textwidth 159.2 mm
\footskip 9 mm
\setlength{\parindent}{0pt}
\setlength{\parskip}{5pt plus 2pt minus 1pt}
\pagestyle{plain}
\usepackage{amssymb}
\usepackage{amsthm}
\usepackage{amsmath}
\usepackage{graphicx}
\usepackage{enumerate}

\DeclareMathOperator{\Max}{Max}
\DeclareMathOperator{\Min}{Min}

\newtheorem{theorem}{Theorem}[section]
\newtheorem{definition}[theorem]{Definition}
\newtheorem{lemma}[theorem]{Lemma}
\newtheorem{proposition}[theorem]{Proposition}
\newtheorem{observation}[theorem]{Observation}
\newtheorem{remark}[theorem]{Remark}
\newtheorem{example}[theorem]{Example}

\title{Tense logic based on finite orthomodular posets}
\author{Ivan~Chajda and Helmut~L\"anger}
\date{}
\begin{document}

\footnotetext{Support of the research of both authors by the Austrian Science Fund (FWF), project I~4579-N, and the Czech Science Foundation (GA\v CR), project 20-09869L, entitled ``The many facets of orthomodularity'', is gratefully acknowledged.}

\maketitle

\begin{abstract}
It is widely accepted that the logic of quantum mechanics is based on orthomodular posets. However, such a logic is not dynamic in the sense that it does not incorporate time dimension. To fill this gap, we introduce certain tense operators on such a logic in an inexact way, but still satisfying requirements asked on tense operators in the classical logic based on Boolean algebras or in various non-classical logics. Our construction of tense operators works perfectly when the orthomodular poset in question is finite. We investigate the behaviour of these tense operators, e.g.\ we show that some of them form a dynamic pair. Moreover, we prove that if the tense operators preserve one of the inexact connectives conjunction or implication as defined by the authors recently in another paper, then they also preserve the other one. Finally, we show how to construct the binary relation of time preference on a given time set provided the tense operators are given, up to equivalence induced by natural quasiorders.
\end{abstract}

{\bf AMS Subject Classification:} 03G12, 03B44, 03B47, 06A11, 81P10

{\bf Keywords:} Orthomodular poset, tense operators, logic of quantum mechanics, tense logic, inexact conjunction, inexact implication, adjoint pair, time frame, dynamic pair
 
\section{Introduction}

At the beginning of the twentieth century it was recognized that the logic of quantum mechanics differs essentially from classical propositional calculus based algebraically on Boolean algebras. K.~Husimi \cite H and G.~Birkhoff together with J.~von~Neumann \cite{BV} introduced orthomodular lattices in order to serve as an algebraic base for the logic of quantum mechanics, see \cite{Be}. These lattices incorporate many aspects of this logic with one exception. Namely, within the logic of quantum mechanics the disjunction of two propositions need not exist in the case when these propositions are neither orthogonal nor comparable. This fact motivated a number of researchers to consider orthomodular posets instead of orthomodular lattices within their corresponding investigations, see e.g.\ \cite{PP} and references therein.

A propositional logic, either classical or non-classical, usually does not incorporate the dimension of time. In order to organize this logic as a so-called {\em tense logic} (or {\em time logic} in another terminology, see \cite{FP}, \cite{FGV}, \cite{RU} and \cite V) we usually construct the so-called {\em tense operators} $P$, $F$, $H$ and $G$. Their meaning is as follows:
\begin{align*}
P & \ldots\text{``It has at some time been the case that''}, \\
F & \ldots\text{``It will at some time be the case that''}, \\
H & \ldots\text{``It has always been the case that''}, \\
G & \ldots\text{``It will always be the case that''}.
\end{align*}
As the reader may guess, we need a certain time scale. For this reason a {\em time frame} $(T,R)$ was introduced. By $T\neq\emptyset$ is meant a set of time, either finite or infinite, and $R\subseteq T^2$ is the relation of time preference, i.e.\ for $s,t\in T$ we say that $s\mathrel Rt$ means $s$ is ``before'' $t$, or $t$ is ``after'' $s$. For our purposes in this paper we will consider only so-called {\em serial relations} (see \cite{CP15a}), i.e.\ relations $R$ such that for each $s\in T$ there exist some $r,t\in T$ with $r\mathrel Rs$ and $s\mathrel Rt$. Of course, if $R$ is reflexive then it is serial. Usually, $R$ is considered to be a partial order relation or a quasiorder relation, see e.g.\ \cite{Bu}, \cite{DG} and \cite{FP}.

Another important task in tense logics is to construct for given tense operators a relation $R$ on a given time set $T$ such that the new tense operators constructed by means of this relation $R$ coincide with the given tense operators.

If the logic in question is based on a complete lattice $(L,\vee,\wedge)$ and the time frame $(T,R)$ is given then the tense operators $P,F,H,G\colon L^T\rightarrow L^T$ can be defined as follows:
\begin{align*}
P(q)(s) & :=\bigvee\{q(t)\mid t\mathrel Rs\}, \\
F(q)(s) & :=\bigvee\{q(t)\mid s\mathrel Rt\}, \\
H(q)(s) & :=\bigwedge\{q(t)\mid t\mathrel Rs\}, \\
G(q)(s) & :=\bigwedge\{q(t)\mid s\mathrel Rt\}
\end{align*}
for all $q\in A^T$ and all $s\in T$ (see e.g.\ \cite{BP}, \cite{CP15b}, \cite E, \cite{FP} and \cite V).

The problem arises when our logic is based on a poset $(A,\leq)$ where joins and meets need not exist. In \cite{CP13} such a situation has been solved by embedding of $A^T$ into a complete lattice. Then we can use the aforementioned definitions of tense operators $P$, $F$, $H$ and $G$, but the disadvantage is that the results of these operators need not belong to $A^T$ again. This motivated us to try another approach, see Section~3.

Moreover, if $\odot$ denotes conjunction and $\rightarrow$ implication within the given propositional logic, we usually ask our tense operators to satisfy the following inequalities and equalities:
\begin{enumerate}[(1)]
\item $P(x)\odot H(y)\leq P(x\odot y)$,
\item $H(x)\odot P(y)\leq P(x\odot y)$,
\item $H(x)\odot H(y)=H(x\odot y)$,
\item $F(x)\odot G(y)\leq F(x\odot y)$,
\item $G(x)\odot F(y)\leq F(x\odot y)$,
\item $G(x)\odot G(y)=G(x\odot y)$,
\item $P(x\rightarrow y)\leq H(x)\rightarrow P(y)$,
\item $H(x\rightarrow y)\leq P(x)\rightarrow P(y)$,
\item $H(x\rightarrow y)\leq H(x)\rightarrow H(y)$,
\item $F(x\rightarrow y)\leq G(x)\rightarrow F(y)$,
\item $G(x\rightarrow y)\leq F(x)\rightarrow F(y)$,
\item $G(x\rightarrow y)\leq G(x)\rightarrow G(y)$.
\end{enumerate}
For our considerations we need the connectives $\odot$ and $\rightarrow$ introduced in an orthomodular poset. In general this task is ambiguous, see \cite{CP13}, \cite{CP16}, \cite{FP} and \cite K, but for finite orthomodular posets (in fact for orthomodular posets of finite height) it was solved by the authors in \cite{CL} where these connectives are introduced in such a way that one gets an adjoint pair. This is the reason why we will restrict ourselves to finite orthomodular posets only.

The connectives introduced in \cite{CL} are everywhere defined, but their results need not be elements of the given orthomodular poset, but may be subsets of this poset containing (incomparable) elements of the same (maximal) truth value. This may cause a problem for the composition of tense operators. Namely, we want to show that our couple $(P,G)$ of tense operators forms a {\em dynamic pair} on the given orthomodular poset $\mathbf A=(A,\leq,{}',0,1)$, i.e.\ the following axioms (P1) -- (P3) should hold (see \cite{CP15a}):
\begin{enumerate}[(P1)]
\item $G(1)=1$ and $P(0)=0$,
\item $p\leq q$ implies $G(p)\leq G(q)$ and $P(p)\leq P(q)$,
\item $q\leq(G\circ P)(q)$ and $(P\circ G)(q)\leq q$.
\end{enumerate}
If $G,P\colon A^T\rightarrow(2^A\setminus\{\emptyset\})^T$ for some time frame $(T,R)$ then we must solve the problem how to define the compositions $G\circ P$ and $P\circ G$ in (P3).

\section{Preliminaries}

The following concepts are taken from \cite{Be} and \cite{Bi}.

Consider a poset $(A,\leq)$. If $a,b\in A$ and $\sup(a,b)$ exists then we will denote it by $a\vee b$. If $\inf(a,b)$ exists, it will be denoted by $a\wedge b$.

A unary operation $'$ on $A$ is called an {\em antitone involution} if $a\leq b$ implies $b'\leq a'$ and if $a''=a$ for each $a,b\in A$. If the poset $(A,\leq)$ has a bottom or top element, this element will be denoted by $0$ or $1$, respectively, and we will write $(A,\leq,0,1)$ in order to express the fact that $(A,\leq)$ is {\em bounded}.

A {\em complementation} on a bounded poset $(A,\leq,0,1)$ is a unary operation $'$ on $A$ satisfying $a\vee a'=1$ and $a\wedge a'=0$ for each $a\in A$. For a bounded poset $(A,\leq,0,1)$ with an antitone involution $'$ that is a complementation we will write $(A,\leq,{}',0,1)$.

\begin{definition}\label{def1}
An {\em orthomodular poset} is a bounded poset $(A,\leq,{}',0,1)$ with an antitone involution $'$ that is a complementation satisfying the following two conditions:
\begin{enumerate}[{\rm(i)}]
\item If $x,y\in A$ and $x\leq y'$ then $x\vee y$ is defined,
\item if $x,y\in A$ and $x\leq y$ then $y=x\vee(y\wedge x')$.
\end{enumerate}
Condition {\rm(ii)} is called the {\em orthomodular law}. Two elements $a,b$ of $A$ are called {\em orthogonal to each other} if $a\leq b'$ {\rm(}which is equivalent to $b\leq a'${\rm)}.
\end{definition}

Let us note that due to De Morgan's laws, (ii) of Definition~\ref{def1} is equivalent to the condition
\begin{enumerate}
\item[(ii')] If $x,y\in A$ and $x\leq y$ then $x=y\wedge(x\vee y')$.
\end{enumerate}
Throughout the paper we will consider only finite orthomodular posets. An example of a finite orthomodular poset which is not a lattice is depicted in Figure~1.

\begin{center}
\setlength{\unitlength}{7mm}
\begin{picture}(18,8)
\put(9,1){\circle*{.3}}
\put(1,3){\circle*{.3}}
\put(3,3){\circle*{.3}}
\put(5,3){\circle*{.3}}
\put(7,3){\circle*{.3}}
\put(9,3){\circle*{.3}}
\put(11,3){\circle*{.3}}
\put(13,3){\circle*{.3}}
\put(15,3){\circle*{.3}}
\put(17,3){\circle*{.3}}
\put(1,5){\circle*{.3}}
\put(3,5){\circle*{.3}}
\put(5,5){\circle*{.3}}
\put(7,5){\circle*{.3}}
\put(9,5){\circle*{.3}}
\put(11,5){\circle*{.3}}
\put(13,5){\circle*{.3}}
\put(15,5){\circle*{.3}}
\put(17,5){\circle*{.3}}
\put(9,7){\circle*{.3}}
\put(1,3){\line(0,2)2}
\put(1,3){\line(1,1)2}
\put(1,3){\line(3,1)6}
\put(1,3){\line(4,1)8}
\put(3,3){\line(-1,1)2}
\put(3,3){\line(1,1)2}
\put(3,3){\line(2,1)4}
\put(3,3){\line(4,1)8}
\put(5,3){\line(-1,1)2}
\put(5,3){\line(0,1)2}
\put(5,3){\line(2,1)4}
\put(5,3){\line(3,1)6}
\put(7,3){\line(-3,1)6}
\put(7,3){\line(-2,1)4}
\put(7,3){\line(3,1)6}
\put(7,3){\line(4,1)8}
\put(9,3){\line(-4,1)8}
\put(9,3){\line(-2,1)4}
\put(9,3){\line(2,1)4}
\put(9,3){\line(4,1)8}
\put(11,3){\line(-4,1)8}
\put(11,3){\line(-3,1)6}
\put(11,3){\line(2,1)4}
\put(11,3){\line(3,1)6}
\put(13,3){\line(-3,1)6}
\put(13,3){\line(-2,1)4}
\put(13,3){\line(0,1)2}
\put(13,3){\line(1,1)2}
\put(15,3){\line(-4,1)8}
\put(15,3){\line(-2,1)4}
\put(15,3){\line(-1,1)2}
\put(15,3){\line(1,1)2}
\put(17,3){\line(-4,1)8}
\put(17,3){\line(-3,1)6}
\put(17,3){\line(-1,1)2}
\put(17,3){\line(0,1)2}
\put(9,1){\line(-4,1)8}
\put(9,1){\line(-3,1)6}
\put(9,1){\line(-2,1)4}
\put(9,1){\line(-1,1)2}
\put(9,1){\line(0,1)2}
\put(9,1){\line(1,1)2}
\put(9,1){\line(2,1)4}
\put(9,1){\line(3,1)6}
\put(9,1){\line(4,1)8}
\put(9,7){\line(-4,-1)8}
\put(9,7){\line(-3,-1)6}
\put(9,7){\line(-2,-1)4}
\put(9,7){\line(-1,-1)2}
\put(9,7){\line(0,-1)2}
\put(9,7){\line(1,-1)2}
\put(9,7){\line(2,-1)4}
\put(9,7){\line(3,-1)6}
\put(9,7){\line(4,-1)8}
\put(8.85,.25){$0$}
\put(.85,2.3){$a$}
\put(2.85,2.3){$b$}
\put(4.85,2.3){$c$}
\put(6.85,2.3){$d$}
\put(8.85,2.3){$e$}
\put(10.85,2.3){$f$}
\put(12.85,2.3){$g$}
\put(14.85,2.3){$h$}
\put(16.85,2.3){$i$}
\put(.8,5.4){$i'$}
\put(2.8,5.4){$h'$}
\put(4.8,5.4){$g'$}
\put(6.8,5.4){$f'$}
\put(8.8,5.4){$e'$}
\put(10.8,5.4){$d'$}
\put(12.8,5.4){$c'$}
\put(14.8,5.4){$b'$}
\put(16.8,5.4){$a'$}
\put(8.85,7.4){$1$}
\put(1.8,-.75){{\rm Fig.~1}. Smallest orthomodular poset not being a lattice}
\end{picture}
\end{center}

\vspace*{3mm}

This orthomodular poset is not a lattice since $a\vee b$ does not exist. Namely, $i'$ and $f'$ are minimal upper bounds of $\{a,b\}$. Also $i'\wedge f'$ does not exist since $a$ and $b$ are maximal lower bounds of $\{i',f'\}$.

Consider a poset $(A,\leq)$ and a time frame $(T,R)$ and let $B,C$ be non-empty subsets of $A$ and $x,y\in(2^A\setminus\{\emptyset\})^T$. We define
\begin{align*}
       B\leq C & :\Leftrightarrow b\leq c\text{ for all }b\in B\text{ and all }c\in C, \\
      B\leq_1C & :\Leftrightarrow\text{for every }b\in B\text{ there exists some }c\in C\text{ with }b\leq c, \\
      B\leq_2C & :\Leftrightarrow\text{for every }c\in C\text{ there exists some }b\in B\text{ with }b\leq c, \\
B\sqsubseteq C & :\Leftrightarrow\text{there exists some }b\in B\text{ and some }c\in C\text{ with }b\leq c, \\
       x\leq y & :\Leftrightarrow x(t)\leq y(t)\text{ for all }t\in T, \\
      x\leq_1y & :\Leftrightarrow x(t)\leq_1y(t)\text{ for all }t\in T, \\
      x\leq_2y & :\Leftrightarrow x(t)\leq_2y(t)\text{ for all }t\in T, \\
x\sqsubseteq y & :\Leftrightarrow x(t)\sqsubseteq y(t)\text{ for all }t\in T.
\end{align*}
For $B,C\in2^{(A^T)}\setminus\{\emptyset\}$ we define
\begin{align*}
       B\leq C & :\Leftrightarrow p\leq q\text{ for all }p\in B\text{ and all }q\in B, \\
      B\leq_1C & :\Leftrightarrow p\leq_1q\text{ for all }p\in B\text{ and all }q\in B, \\
      B\leq_2C & :\Leftrightarrow p\leq_2 q\text{ for all }p\in B\text{ and all }q\in B, \\
B\sqsubseteq C & :\Leftrightarrow p\sqsubseteq q\text{ for all }p\in B\text{ and all }q\in B.
\end{align*}
Hereby, for $a\in A$, $a_t\in A$ for all $t\in T$ and $p\in A^T$ we identify
\begin{align*}
& \{a\}\text{ with }a, \\
& \text{the mapping assigning to each }t\in T\text{ the set }\{a_t\}\text{ with the mapping assigning to each} \\
& \quad t\in T\text{ the element }a_t, \\
& \{p\}\text{ with }p.
\end{align*}
Now for $B\subseteq A$ we define
\begin{align*}
L(B) & :=\{x\in A\mid x\leq B\}, \\
U(B) & :=\{x\in A\mid B\leq x\},
\end{align*}
the so-called {\em lower cone} and {\em upper cone} of $B$, respectively. If $(A,\leq)$ is a poset and $B\subseteq A$ then we denote the set of all maximal (minimal) elements of $B$ by $\Max B$ ($\Min B$). If $A$ is finite and $B\neq\emptyset$ then $\Max B,\Min B\neq\emptyset$.

If $a\in A$, $B\subseteq A$ and $a\vee b$ exists for all $b\in B$ then we will denote $\{a\vee b\mid b\in B\}$ by $a\vee B$. Analogously, we proceed if $\vee$ is replaced by $\wedge$.

The following result was proved in \cite{CL}.

\begin{proposition}\label{prop1}
Let $(A,\leq,{}',0,1)$ be an orthomodular poset, $a,b\in A$ and $B,C\subseteq A$. Then the following hold:
\begin{enumerate}[{\rm(i)}]
\item If $a\leq B$ then $B=a\vee(B\wedge a')$,
\item if $C\leq a$ then $C=a\wedge(C\vee a')$,
\item $\Min U(a,b')\wedge b$ and $a'\vee\Max L(a,b)$ are defined.
\end{enumerate}
\end{proposition}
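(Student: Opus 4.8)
The plan is to reduce all three parts to a single observation about which joins and meets are actually guaranteed to exist in an orthomodular poset: by Definition~\ref{def1}(i) a join $x\vee y$ is available as soon as $x\le y'$ (the two elements are orthogonal), and then by the antitone involution the dual meet $(x\vee y')'=x'\wedge y$ is available as well. So before touching the statement I would record the elementary existence fact that whenever $u\le v$ one has $u\le v=(v')'$, so $u$ and $v'$ are orthogonal, $u\vee v'$ exists, and hence $v\wedge u'=(u\vee v')'$ exists. This is exactly the shape in which the orthomodular law (Definition~\ref{def1}(ii)) and its reformulation~(ii$'$) are phrased, so these two parts will fall out essentially for free.

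For (i) I would argue elementwise. Fixing $b\in B$, the hypothesis $a\le B$ gives $a\le b$, and the orthomodular law yields $b=a\vee(b\wedge a')$, every term being defined by the existence fact above (indeed $b\wedge a'\le a'$ forces $a\perp(b\wedge a')$, so even the outer join is an orthogonal one). Letting $b$ range over $B$ and recalling that $a\vee(B\wedge a')$ denotes $\{a\vee(b\wedge a')\mid b\in B\}$, this gives $B=a\vee(B\wedge a')$. Part (ii) is the exact dual: for $c\in C$ we have $c\le a$, so~(ii$'$) gives $c=a\wedge(c\vee a')$, and collecting over $c\in C$ yields $C=a\wedge(C\vee a')$.

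Part (iii) is where the orthogonality bookkeeping must be done by hand, since here only existence is claimed, not an identity. For the first expression take $m\in\Min U(a,b')$; then $b'\le m$, hence $m'\le b=(b')'$, so $m'$ and $b'$ are orthogonal and $m'\vee b'$ exists by Definition~\ref{def1}(i); its De Morgan dual $m\wedge b=(m'\vee b')'$ therefore exists. As this holds for every $m\in\Min U(a,b')$, the set $\Min U(a,b')\wedge b$ is defined. For the second expression take $n\in\Max L(a,b)$; then $n\le a$, hence $a'\le n'$, so $a'$ and $n$ are orthogonal and $a'\vee n$ exists by Definition~\ref{def1}(i), showing $a'\vee\Max L(a,b)$ is defined.

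The point to be careful about, rather than a genuine obstacle, is that in a poset meets and joins need not exist, so each one must be produced from Definition~\ref{def1}(i) through a concrete orthogonality witness instead of being taken for granted. The one mild subtlety in (iii) is to notice that it is precisely the coordinate $b'$ of the upper cone $U(a,b')$ (respectively the bound $n\le a$ coming from the lower cone) that supplies the required orthogonality, so that neither minimality nor maximality is actually needed for these existence claims — they hold for every element of the cone, and the restriction to $\Min$ and $\Max$ is only what the later applications demand.
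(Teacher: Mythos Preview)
Your argument is correct. The paper does not actually supply a proof of Proposition~\ref{prop1}; it merely records that the result was proved in \cite{CL}. Your write-up fills in exactly the expected verification: parts (i) and (ii) are immediate elementwise applications of the orthomodular law and its dual~(ii$'$), once one checks that the relevant meets and joins exist via orthogonality and De~Morgan, and part (iii) is the observation that membership in $U(a,b')$ already forces $b'\le m$, hence $m\wedge b$ exists, and dually for $a'\vee n$ with $n\in L(a,b)$. Your closing remark that minimality/maximality play no role in the existence claim of (iii) is accurate and worth keeping.
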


Let us note that both $\leq_1$ and $\leq_2$ are extensions of $\leq$ and are reflexive and transitive, i.e.\ they are so-called {\em quasiorder relations}.

In \cite{CL} we investigated the so-called {\em inexact connectives} in the logic based on a finite orthomodular poset. These are denoted by $\odot$ ({\em conjunction}) and $\rightarrow$ ({\em implication}) and defined by
\begin{enumerate}
\item[(13)] $x\odot y:=\Min U(x,y')\wedge y\quad$ and $\quad x\rightarrow y:=x'\vee\Max L(x,y)$.
\end{enumerate}
Due to Proposition~\ref{prop1} these expressions are everywhere defined and $\odot$ and $\rightarrow$ are binary operators on $A$, more precisely mappings from $A^2$ to $2^A\setminus\{\emptyset\}$. We extend them to $(2^A\setminus\{\emptyset\})^2$ by defining
\begin{align*}
      B\odot C & :=\bigcup\{b\odot c\mid b\in B,c\in C\}, \\
B\rightarrow C & :=\bigcup\{b\rightarrow c\mid b\in B,c\in C\}
\end{align*}
for all $B,C\in2^A\setminus\{\emptyset\}$. The extended operators $\odot$ and $\rightarrow$ are now mappings from $(2^A\setminus\{\emptyset\})^2$ to $2^A\setminus\{\emptyset\}$. In \cite{CL} we showed that $(\odot,\rightarrow)$ form an {\em adjoint pair}, i.e.
\[
x\odot y\sqsubseteq z\text{ if and only if }x\sqsubseteq y\rightarrow z.
\]
 Moreover, we define
\begin{align*}
      (x\odot y)(t) & :=x(t)\odot y(t), \\
(x\rightarrow y)(t) & :=x(t)\rightarrow y(t)
\end{align*}
for all $x,y\in(2^A\setminus\{\emptyset\})^T$ and all $t\in T$. The six-tuple $(A,\leq,\odot,\rightarrow,0,1)$ forms a so-called {\em operator residuated structure}, see \cite{CL} for definition and basic properties. This structure satisfies the so-called {\em divisibility}, it means that
\[
(x\rightarrow y)\odot x\approx\Max L(x,y).
\]
Moreover, $\odot$ satisfies the identities $x\odot x\approx x$ and $x\odot1\approx1\odot x\approx x$. In the following we will investigate tense operators on finite orthomodular posets equipped with the operators $\odot$ and $\rightarrow$ forming a certain propositional logic of quantum mechanics.

\begin{lemma}\label{lem3}
Let $(A,\leq,{}',0,1)$ be a finite orthomodular poset and $p,q\in A^T$. Then
\begin{enumerate}[{\rm(i)}]
\item $p\leq q\rightarrow(p\odot q)$,
\item $(p\rightarrow q)\odot p\leq q$.
\end{enumerate}
\end{lemma}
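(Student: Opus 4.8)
The plan is to reduce both inequalities to the level of single poset elements and then to settle them separately, using divisibility for (ii) and the orthomodular law for (i). Since the orders on $(2^A\setminus\{\emptyset\})^T$ are defined coordinatewise and singletons are identified with their elements, the assertions $p\leq q\rightarrow(p\odot q)$ and $(p\rightarrow q)\odot p\leq q$ hold if and only if, for every $t\in T$, writing $a:=p(t)$ and $b:=q(t)$, one has $a\leq z$ for all $z\in b\rightarrow(a\odot b)$ and $z\leq b$ for all $z\in(a\rightarrow b)\odot a$. Thus it suffices to prove these two statements for arbitrary $a,b\in A$.

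For (ii) I would simply invoke divisibility. Since $(a\rightarrow b)\odot a=\Max L(a,b)$ and every element of $\Max L(a,b)$ lies in $L(a,b)$, hence is below both $a$ and $b$, every element of $(a\rightarrow b)\odot a$ is $\leq b$; this is exactly the coordinatewise inequality required.

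For (i) the idea is to compute $b\rightarrow(a\odot b)$ explicitly and to recognise it as $\Min U(a,b')$. By definition $a\odot b=\Min U(a,b')\wedge b$, which by Proposition~\ref{prop1}(iii) is defined, so each $c\in a\odot b$ has the form $c=u\wedge b$ with $u\in\Min U(a,b')$, and in particular $c\leq b$. Because $c\leq b$, the lower cone $L(b,c)$ coincides with $L(c)$, whence $\Max L(b,c)=\{c\}$ and therefore $b\rightarrow c=b'\vee\{c\}=\{b'\vee c\}$, the join $b'\vee c$ existing by Definition~\ref{def1}(i) since $c\leq(b')'=b$. The decisive step is then the orthomodular law (Definition~\ref{def1}(ii)) applied to $b'\leq u$: it gives $u=b'\vee(u\wedge(b')')=b'\vee(u\wedge b)=b'\vee c$. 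Consequently every element of $b\rightarrow(a\odot b)$ equals some $u\in\Min U(a,b')$, so $b\rightarrow(a\odot b)=\Min U(a,b')$, and since each such $u$ satisfies $a\leq u$, we obtain $a\leq b\rightarrow(a\odot b)$.

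The routine parts are the coordinatewise reductions and the bookkeeping guaranteeing that the relevant joins and meets exist. The one genuinely load-bearing observation is that the orthomodular identity $u=b'\vee(u\wedge b)$ collapses $b\rightarrow(a\odot b)$ back onto $\Min U(a,b')$; I expect this collapse, together with the remark that $\Max L(b,c)=\{c\}$ whenever $c\leq b$, to be the crux, everything else following directly from the definitions of $\odot$, $\rightarrow$ and the coordinatewise orders. Note also that the adjointness $x\odot y\sqsubseteq z\Leftrightarrow x\sqsubseteq y\rightarrow z$ is \emph{not} strong enough here, since it only yields the existential relation $\sqsubseteq$, whereas (i) and (ii) demand the universal order $\leq$; this is precisely why the explicit computation above is needed.
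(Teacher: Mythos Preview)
Your proof is correct and follows essentially the same approach as the paper: for (i) you both compute $b\rightarrow(a\odot b)=\Min U(a,b')$ via the orthomodular identity $u=b'\vee(u\wedge b)$ (the paper packages this as Proposition~\ref{prop1}(i)), and for (ii) you both arrive at $(a\rightarrow b)\odot a=\Max L(a,b)$, the only difference being that you cite divisibility from \cite{CL} while the paper reproves it directly using Proposition~\ref{prop1}(ii). Your explicit coordinatewise reduction and the closing remark that adjointness only yields $\sqsubseteq$ rather than $\leq$ are nice clarifications not spelled out in the paper.
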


\begin{proof}
\
\begin{enumerate}[(i)]
\item Since $r\in\Min U(p,q')\wedge q$ implies $r\leq q$ and hence $\Max L(q,r)=r$ and since $q'\leq\Min U(p,q')$ we have
\begin{align*}
q\rightarrow(p\odot q) & =q\rightarrow\big(\Min U(p,q')\wedge q\big)=\bigcup\{q\rightarrow r\mid r\in\Min U(p,q')\wedge q\}= \\
                       & =\bigcup\{q'\vee\Max L(q,r)\mid r\in\Min U(p,q')\wedge q\}= \\
                       & =\bigcup\{q'\vee r\mid r\in\Min U(p,q')\wedge q\}=q'\vee\big(\Min U(p,q')\wedge q\big)= \\
                       & =\Min U(p,q')\geq p
\end{align*}
according to Proposition~\ref{prop1}.
\item Since $r\in p'\vee\Max L(p,q)\wedge q$ implies $p'\leq r$ and hence $\Min U(r,p')=r$ and since $\Max L(p,q)\leq p$ we have
\begin{align*}
(p\rightarrow q)\odot p & =\big(p'\vee\Max L(p,q)\big)\odot p=\bigcup\{r\odot p\mid r\in p'\vee\Max L(p,q)\}= \\
                        & =\bigcup\{\Min U(r,p')\wedge p\mid r\in p'\vee\Max L(p,q)\}= \\
                        & =\bigcup\{r\wedge p\mid r\in p'\vee\Max L(p,q)\}=\big(p'\vee\Max L(p,q)\big)\wedge p=\Max L(p,q)\leq \\
                        & \leq q
\end{align*}
according to Proposition~\ref{prop1}.
\end{enumerate}
\end{proof}

\begin{lemma}
Let $(A,\leq,{}',0,1)$ be a finite orthomodular poset, $x\in(2^A\setminus\{\emptyset\})^T$ and $p\in A^T$. Then
\begin{enumerate}[{\rm(i)}]
\item $x\leq_1p\rightarrow(x\odot p)$ and $x\leq_2p\rightarrow(x\odot p)$,
\item $(p\rightarrow x)\odot p\leq_1x$ and $(p\rightarrow x)\odot p\leq_2x$.
\end{enumerate}
\end{lemma}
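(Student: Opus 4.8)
The plan is to reduce everything to the singleton case already settled in Lemma~\ref{lem3} by exploiting that $\odot$ and $\rightarrow$ are extended to set arguments by means of unions, together with the fact that in a finite poset the sets $\Min U(a,b')$ and $\Max L(a,b)$ are always non-empty. Since all of $\leq,\leq_1,\leq_2$ are defined pointwise, I would fix a single $t\in T$ and write $S:=x(t)\in2^A\setminus\{\emptyset\}$ and $r:=p(t)\in A$, so that it suffices to establish the four set-level statements $S\leq_1\big(r\rightarrow(S\odot r)\big)$, $S\leq_2\big(r\rightarrow(S\odot r)\big)$, $\big((r\rightarrow S)\odot r\big)\leq_1S$ and $\big((r\rightarrow S)\odot r\big)\leq_2S$.

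The heart of the argument is two union identities obtained by unfolding the definition of the extended operators. First, because $S\odot r=\bigcup_{a\in S}(a\odot r)$ and because $\rightarrow$ distributes over unions in its second argument, one gets
\[
r\rightarrow(S\odot r)=\bigcup_{a\in S}\big(r\rightarrow(a\odot r)\big)=\bigcup_{a\in S}\Min U(a,r'),
\]
where the last equality is exactly the computation carried out in the proof of Lemma~\ref{lem3}(i) with $p:=a$ and $q:=r$. Symmetrically, since $r\rightarrow S=\bigcup_{a\in S}(r\rightarrow a)$ and $\odot$ distributes over unions in its first argument,
\[
(r\rightarrow S)\odot r=\bigcup_{a\in S}\big((r\rightarrow a)\odot r\big)=\bigcup_{a\in S}\Max L(r,a),
\]
the last equality being the computation in the proof of Lemma~\ref{lem3}(ii) with $p:=r$ and $q:=a$.

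With these two descriptions the four claims become immediate. For $r\rightarrow(S\odot r)=\bigcup_{a\in S}\Min U(a,r')$, every element of $\Min U(a,r')$ is an upper bound of $a$, so each $a\in S$ is dominated by any member of its own (by finiteness non-empty) block $\Min U(a,r')$, giving $\leq_1$; conversely each element of the union lies in some block $\Min U(a_0,r')$ and hence dominates that particular $a_0\in S$, giving $\leq_2$. Statement (ii) is dual: every element of $\Max L(r,a)$ lies below $a$, which yields $\leq_1$ directly, while non-emptiness of each block $\Max L(r,a)$ supplies, for every $a\in S$, a witness below $a$, giving $\leq_2$. The only point requiring care---and the only place where finiteness enters---is matching the direction of each quasiorder ($\leq_1$ quantifies over elements of the left-hand set, $\leq_2$ over elements of the right-hand set) against which of the two sets is written as a union of $\Min$- or $\Max$-blocks; once the two union identities above are in place, no genuine obstacle remains.
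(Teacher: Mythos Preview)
Your argument is correct and follows essentially the same route as the paper: both proofs establish the identities $p\rightarrow(x\odot p)=\bigcup_{q\in x}\Min U(q,p')$ and $(p\rightarrow x)\odot p=\bigcup_{q\in x}\Max L(p,q)$, from which the $\leq_1$ and $\leq_2$ statements follow immediately. The only difference is organizational---the paper unfolds all definitions directly, whereas you factor through the computation already done in Lemma~\ref{lem3} and then spell out the $\leq_1$/$\leq_2$ verification that the paper leaves implicit.
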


\begin{proof}
\
\begin{enumerate}[(i)]
\item We have
\begin{align*}
p\rightarrow(x\odot p) & =p\rightarrow\bigcup\{q\odot p\mid q\in x\}=p\rightarrow\bigcup\{\Min U(q,p')\wedge p\mid q\in x\}= \\
                       & =\bigcup\{p\rightarrow r\mid r\in\bigcup\{\Min U(q,p')\wedge p\mid q\in x\}\}= \\
                       & =\bigcup\{p'\vee\Max L(p,r)\mid r\in\bigcup\{\Min U(q,p')\wedge p\mid q\in x\}\}= \\
                       & =\bigcup\{p'\vee r\mid r\in\bigcup\{\Min U(q,p')\wedge p\mid q\in x\}\}= \\
                       & =p'\vee\bigcup\{\Min U(q,p')\wedge p\mid q\in x\}= \\
											 & =\bigcup\{p'\vee\big(\Min U(q,p')\wedge p\big)\mid q\in x\}=\bigcup\{\Min U(q,p')\mid q\in x\}.
\end{align*}
\item We have
\begin{align*}
(p\rightarrow x)\odot p & =\bigcup\{p\rightarrow q\mid q\in x\}\odot p=\bigcup\{p'\vee\Max L(p,q)\mid q\in x\}\odot p= \\
                        & =\bigcup\{r\odot p\mid r\in\bigcup\{p'\vee\Max L(p,q)\mid q\in x\}\}= \\
                        & =\bigcup\{\Min U(r,p')\wedge p\mid r\in\bigcup\{p'\vee\Max L(p,q)\mid q\in x\}\}= \\
                        & =\bigcup\{r\wedge p\mid r\in\bigcup\{p'\vee\Max L(p,q)\mid q\in x\}\}= \\
                        & =\bigcup\{p'\vee\Max L(p,q)\mid q\in x\}\wedge p= \\
                        & =\bigcup\{\big(p'\vee\Max L(p,q)\big)\wedge p\mid q\in x\}=\bigcup\{\Max L(p,q)\mid q\in x\}.
\end{align*}
\end{enumerate}
\end{proof}

\section{Dynamic pairs}

As mentioned in the introduction, by a {\em dynamic pair} is meant a couple $(P,G)$ of tense operators satisfying axioms (P1) -- (P3). We firstly define tense operators on a finite orthomodular poset $(A,\leq,{}',0,1)$ as follows: Let a time frame $(T,R)$ be given. Define $P,F,H,G\colon A^T\rightarrow(2^A\setminus\{\emptyset\})^T$ by
\begin{align*}
P(q)(s) & :=\Min U(\{q(t)\mid t\mathrel Rs\}), \\
F(q)(s) & :=\Min U(\{q(t)\mid s\mathrel Rt\}), \\
H(q)(s) & :=\Max L(\{q(t)\mid t\mathrel Rs\}), \\
G(q)(s) & :=\Max L(\{q(t)\mid s\mathrel Rt\})
\end{align*}
for all $q\in A^T$ and all $s\in T$.

It is elementary to show that if the orthomodular poset $(A,\leq,{}',0,1)$ is a complete lattice then these tense operators coincide with those mentioned in the introduction. However, in general $P(q)$ for $q\in A^T$ need not be a single element of $A^T$ (i.e.\ a singleton), but may be a non-empty subset of $A^T$. Analogously for the remaining tense operators. Hence, these operators are again ``inexact'' in the sense that they reach maximal (for $H$, $G$) or minimal (for $P$, $F$) incomparable values and we cannot distinguished among them. On the other hand, the results of these operators belong to $A$ contrary to the case when the poset $(A,\leq,{}',0,1)$ is embedded into a complete lattice, the method used in \cite{CP13}.

 Since we want to compose our tense operators we need to solve two essential tasks:
\begin{enumerate}[(i)]
\item Extend $P,F,H,G$ from $A^T$ to $2^{(A^T)}\setminus\{\emptyset\}$ (i.e.\ lifting of the operators).
\item Define how to compose them.
\end{enumerate}
Concerning the first task we define for $B\in2^{(A^T)}\setminus\{\emptyset\}$ and all $s\in T$
\begin{align*}
P(B)(s) & :=\Min U(\{q(t)\mid q\in B\text{ and }t\mathrel Rs\}), \\
F(B)(s) & :=\Min U(\{q(t)\mid q\in B\text{ and }s\mathrel Rt\}), \\
H(B)(s) & :=\Max L(\{q(t)\mid q\in B\text{ and }t\mathrel Rs\}), \\
G(B)(s) & :=\Max L(\{q(t)\mid q\in B\text{ and }s\mathrel Rt\}).
\end{align*}

In order to solve the second task, we should mention that our extended tense operators are not mappings from $2^{(A^T)}\setminus\{\emptyset\}$ to $2^{(A^T)}\setminus\{\emptyset\}$, but from $2^{(A^T)}\setminus\{\emptyset\}$ to $(2^A\setminus\{\emptyset\})^T$. Hence we introduce the so-called {\em transformation function} $\varphi\colon(2^A\setminus\{\emptyset\})^T\rightarrow2^{(A^T)}\setminus\{\emptyset\}$ as follows:
\[
\varphi(x):=\{q\in A^T\mid q(t)\in x(t)\text{ for all }t\in T\}\text{ for all }x\in(2^A\setminus\{\emptyset\})^T. 
\]
By means of the transformation function we can define the {\em composition} $G*P$ of the tense operators $G$ and $P$ by
\[
G*P:=G\circ\varphi\circ P.
\]

\begin{lemma}\label{lem2}
Let $(A,\leq)$ be a poset, $T\neq\emptyset$, $p\in A^T$ and $x,y\in(2^A\setminus\{\emptyset\})^T$. Then
\begin{enumerate}[{\rm(i)}]
\item $\varphi$ is injective,
\item if $z(t):=\{p(t)\}$ for all $t\in T$ then $\varphi(z)=\{p\}$,
\item we have $x\leq y$ if and only if $\varphi(x)\leq\varphi(y)$.
\end{enumerate}
\end{lemma}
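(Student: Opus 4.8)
The plan is to prove all three parts directly from the definition of $\varphi$, with one recurring idea doing the real work: since every component set $x(t)$ is non-empty, I can always build a global element $q\in\varphi(x)$ that takes a prescribed value at a single chosen coordinate while filling in the remaining coordinates arbitrarily. Part (ii) needs none of this and I would dispatch it first by unfolding definitions: when $z(t)=\{p(t)\}$, the membership condition $q(t)\in z(t)$ is equivalent to $q(t)=p(t)$, so $\varphi(z)=\{q\in A^T\mid q(t)=p(t)\text{ for all }t\in T\}=\{p\}$.

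For part (i), I would argue by double inclusion at each coordinate. Assuming $\varphi(x)=\varphi(y)$, I fix $t_0\in T$ and $a\in x(t_0)$, then choose some $q(t)\in x(t)$ for each $t\neq t_0$ (possible because $x(t)\neq\emptyset$) and set $q(t_0):=a$. This $q$ lies in $\varphi(x)=\varphi(y)$, so $a=q(t_0)\in y(t_0)$; hence $x(t_0)\subseteq y(t_0)$. The symmetric argument yields equality of the component sets for every $t_0$, so $x=y$ and $\varphi$ is injective.

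For part (iii), the forward direction is routine: if $x\leq y$ then for any $q\in\varphi(x)$ and $q'\in\varphi(y)$ I have $q(t)\in x(t)$ and $q'(t)\in y(t)$, hence $q(t)\leq q'(t)$ for every $t$ by definition of $x(t)\leq y(t)$, so $q\leq q'$ and therefore $\varphi(x)\leq\varphi(y)$. For the converse I reuse the witness construction: to check $x(t_0)\leq y(t_0)$ I fix $b\in x(t_0)$ and $c\in y(t_0)$, build $q\in\varphi(x)$ with $q(t_0)=b$ and $q'\in\varphi(y)$ with $q'(t_0)=c$ exactly as in part (i), and then read off $b=q(t_0)\leq q'(t_0)=c$ from the hypothesis $\varphi(x)\leq\varphi(y)$.

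The only point requiring care—less an obstacle than the crux—is shared by parts (i) and (iii): one must genuinely invoke the non-emptiness of each $x(t)$ to realize a prescribed coordinate value by an actual element of $\varphi(x)$. Were some $x(t)$ empty, $\varphi(x)$ itself could be empty and this transfer of information from $\varphi$ back to the individual components would break down, which is precisely why the construction is set up over $(2^A\setminus\{\emptyset\})^T$.
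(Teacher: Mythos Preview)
Your proposal is correct and follows essentially the same approach as the paper: part~(ii) is identical, part~(iii) matches the paper's argument verbatim, and part~(i) uses the same witness construction---the only cosmetic difference is that you argue directly ($\varphi(x)=\varphi(y)\Rightarrow x=y$) while the paper argues by contrapositive ($x\neq y\Rightarrow\varphi(x)\neq\varphi(y)$). Your explicit remark that non-emptiness of each $x(t)$ is what makes the witness construction go through is a welcome addition.
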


\begin{proof}
\
\begin{enumerate}[(i)]
\item Assume $x\neq y$. Then there exists some $t\in T$ with $x(t)\neq y(t)$. Without loss of generality, assume $x(t)\setminus\big(y(t)\big)\neq\emptyset$. Let $a\in x(t)\setminus\big(y(t)\big)$ and $p\in A^T$ with $p(t)=a$ and $p(s)\in x(s)$ for all $s\in T\setminus\{t\}$. Then $p\in\varphi(x)\setminus\big(\varphi(y)\big)$ and hence $\varphi(x)\neq\varphi(y)$.
\item If $z(t):=\{p(t)\}$ for all $t\in T$ then $\varphi(z)=\{q\in A^T\mid q(t)\in z(t)\text{ for all }t\in T\}=\{p\}$.
\item Assume $x\leq y$. Let $p\in\varphi(x)$ and $q\in\varphi(y)$. Then $p(s)\in x(s)$ and $q(s)\in y(s)$ for all $s\in T$. Hence $p(s)\leq q(s)$ for all $s\in T$, i.e.\ $p\leq q$. This shows $\varphi(x)\leq\varphi(y)$. Conversely, assume $\varphi(x)\leq\varphi(y)$. Let $t\in T$, $a\in x(t)$ and $b\in y(t)$. Take $p,q\in A^T$ with $p(t)=a$, $p(s)\in x(s)$ for all $s\in T\setminus\{t\}$, $q(t)=b$ and $q(s)\in y(s)$ for all $s\in T\setminus\{t\}$. Then $p(s)\in x(s)$ and $q(s)\in y(s)$ for all $s\in T$. Hence $p\in\varphi(x)$ and $q\in\varphi(y)$ which implies $p\leq q$ whence $a=p(t)\leq q(t)\leq b$. This shows $x(t)\leq y(t)$. Since $t$ was an arbitrary element of $T$ we conclude $x\leq y$.
\end{enumerate}
\end{proof}

Now we are ready to prove our result. But first we make the following more or less evident observation.

\begin{observation}\label{obs1}
Let $(A,\leq)$ be a finite poset and $B,C$ subsets of $A$ with $C\neq\emptyset$. Then $B\subseteq C$ implies $B\leq_1\Max C$ and $\Min C\leq_2B$.
\end{observation}

\begin{theorem}
Let $\mathbf A=(A,\leq,{}',0,1)$ be a finite orthomodular poset, $(T,R)$ a time frame and $P$ and $G$ tense operators as defined above. Then the couple $(P,G)$ forms a dynamic pair, i.e.
\begin{enumerate}[{\rm(P1)}]
\item $G(1)=1$ and $P(0)=0$,
\item if $p,q\in A^T$ and $p\leq q$ then $G(p)\leq_1G(q)$ and $P(p)\leq_2P(q)$,
\item if $q\in A^T$ then $q\leq_1(G*P)(q)$ and $(P*G)(q)\leq_2q$.
\end{enumerate}
\end{theorem}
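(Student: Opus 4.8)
The plan is to verify the three axioms in turn, treating (P3) as the heart of the matter. Throughout I use that $R$ is serial, so that for every $s\in T$ the sets $\{t\mid s\mathrel Rt\}$ and $\{t\mid t\mathrel Rs\}$ are non-empty; consequently every set of the form $\{q(t)\mid t\mathrel Rs\}$ is non-empty, each value $P(q)(s)$ and $G(q)(s)$ is a non-empty subset of $A$, and both $\varphi(P(q))$ and $\varphi(G(q))$ are non-empty, so that all occurrences of $\Min U$ and $\Max L$ below are taken of non-empty sets.

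For (P1) I would simply compute. For each $s$, seriality gives $\{1(t)\mid s\mathrel Rt\}=\{1\}$, so $G(1)(s)=\Max L(\{1\})=\Max A=\{1\}$, since $L(\{1\})=A$ and $1$ is the top element; under the identification of $\{1\}$ with $1$ this reads $G(1)=1$. Dually $P(0)(s)=\Min U(\{0\})=\Min A=\{0\}$, i.e.\ $P(0)=0$. For (P2) fix $s$ and use $p\leq q$, i.e.\ $p(t)\leq q(t)$ for all $t$. Any element below all $p(t)$ is below all $q(t)$, so $L(\{p(t)\mid s\mathrel Rt\})\subseteq L(\{q(t)\mid s\mathrel Rt\})$; hence $\Max L(\{p(t)\mid s\mathrel Rt\})\subseteq L(\{q(t)\mid s\mathrel Rt\})$, and Observation~\ref{obs1}, applied with the non-empty cone $L(\{q(t)\mid s\mathrel Rt\})$ (it contains $0$), yields $G(p)(s)\leq_1 G(q)(s)$. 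Dually $U(\{q(t)\mid t\mathrel Rs\})\subseteq U(\{p(t)\mid t\mathrel Rs\})$ gives $\Min U(\{q(t)\mid t\mathrel Rs\})\subseteq U(\{p(t)\mid t\mathrel Rs\})$, and Observation~\ref{obs1} gives $P(p)(s)\leq_2 P(q)(s)$. As $s$ is arbitrary, $G(p)\leq_1 G(q)$ and $P(p)\leq_2 P(q)$.

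For (P3) I would first unfold the compositions and then reduce the quasiorder assertions to single membership conditions. Fix $s$ and set
\[
M_s:=\{r(u)\mid r\in\varphi(P(q)),\ s\mathrel Ru\},\qquad N_s:=\{r(u)\mid r\in\varphi(G(q)),\ u\mathrel Rs\},
\]
so that $(G*P)(q)(s)=\Max L(M_s)$ and $(P*G)(q)(s)=\Min U(N_s)$. The reduction I rely on is the following: since $A$ is finite and $L(M_s)$ is a down-set, $q(s)\leq_1\Max L(M_s)$ holds if and only if $q(s)\in L(M_s)$, i.e.\ $q(s)\leq m$ for all $m\in M_s$; dually, since $U(N_s)$ is an up-set, $\Min U(N_s)\leq_2 q(s)$ holds if and only if $q(s)\in U(N_s)$, i.e.\ $m\leq q(s)$ for all $m\in N_s$. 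It then remains to check these elementwise inequalities, and here the definition of $\varphi$ does the work. If $m\in M_s$, write $m=r(u)$ with $r\in\varphi(P(q))$ and $s\mathrel Ru$; then $r(u)\in P(q)(u)=\Min U(\{q(t)\mid t\mathrel Ru\})\subseteq U(\{q(t)\mid t\mathrel Ru\})$, so $q(t)\leq r(u)$ for every $t$ with $t\mathrel Ru$, and taking $t=s$ (legitimate because $s\mathrel Ru$) gives $q(s)\leq m$. Thus $q(s)\in L(M_s)$ and $q\leq_1(G*P)(q)$. The other half is perfectly dual: for $m=r(u)\in N_s$ with $u\mathrel Rs$ we have $r(u)\in G(q)(u)=\Max L(\{q(t)\mid u\mathrel Rt\})\subseteq L(\{q(t)\mid u\mathrel Rt\})$, so $r(u)\leq q(t)$ for all $t$ with $u\mathrel Rt$, and $t=s$ gives $m\leq q(s)$, whence $q(s)\in U(N_s)$ and $(P*G)(q)\leq_2 q$.

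I expect (P3) to be the main obstacle, for two reasons. First, one must keep precise track of the action of $\varphi$ on the intermediate object $P(q)$ (resp.\ $G(q)$): the essential fact is that a function $r$ selected by $\varphi$ has, at each index $u$, a value $r(u)$ lying in the cone $U(\{q(t)\mid t\mathrel Ru\})$ (resp.\ $L(\{q(t)\mid u\mathrel Rt\})$), which is exactly the information the argument needs. Second, one must recognise the reduction that collapses the seemingly complicated $\leq_1$/$\leq_2$ statements about whole sets of maximal/minimal cone elements to the single conditions $q(s)\in L(M_s)$ and $q(s)\in U(N_s)$; this step uses finiteness of $A$ together with the down-set (resp.\ up-set) property of lower (resp.\ upper) cones. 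Once these two observations are in place, the inequalities follow by merely reading off the cone membership and instantiating the relation at $t=s$.
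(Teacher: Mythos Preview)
Your proof is correct and follows essentially the same route as the paper's: for (P1) and (P2) the arguments are identical, and for (P3) both you and the paper unfold $(G*P)(q)(s)$ as $\Max L$ of the set you call $M_s$, verify $q(s)\in L(M_s)$ by instantiating the upper-cone membership at $t=s$, and invoke Observation~\ref{obs1} to obtain $q(s)\leq_1\Max L(M_s)$ (the paper writes $M_s$ as $\bigcup\{\Min U(\{q(t)\mid t\mathrel Rv\})\mid s\mathrel Rv\}$ and routes through the inclusion $L(\bigcup U)\subseteq L(\bigcup\Min U)$, but this is the same computation). Your phrasing of the key step as a biconditional reduction is slightly more than is needed---only the ``if'' direction is used---but it is correct and harmless.
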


\begin{proof}
Let $p,q\in A^T$ and $s\in T$.
\begin{enumerate}[(P1)]
\item $G(1)(s)=\Max L(1)=\Max A=1$ and $P(0)(s)=\Min U(0)=\Min A=0$
\item Assume $p\leq q$. Then
\[
\Max L(\{p(t)\mid s\mathrel Rt\})\subseteq L(\{p(t)\mid s\mathrel Rt\})\subseteq L(\{q(t)\mid s\mathrel Rt\})
\]
and hence
\[
G(p)(s)=\Max L(\{p(t)\mid s\mathrel Rt\})\leq_1\Max L(\{q(t)\mid s\mathrel Rt\})=G(q)(s)
\]
according to Observation~\ref{obs1}. Similarly,
\[
\Min U(\{q(t)\mid t\mathrel Rs\})\subseteq U(\{q(t)\mid t\mathrel Rs\})\subseteq U(\{p(t)\mid t\mathrel Rs\})
\]
and hence
\[
P(p)(s)=\Min U(\{p(t)\mid t\mathrel Rs\})\leq_2\Min U(\{q(t)\mid t\mathrel Rs\})=P(q)(s)
\]
according to Observation~\ref{obs1}.
\item We have
\begin{align*}
              P(q)(s) & =\Min U(\{q(t)\mid t\mathrel Rs\}), \\
\varphi\big(P(q)\big) & =\{r\in A^T\mid r(u)\in P(q)(u)\text{ for all }u\in T\}= \\
                      & =\{r\in A^T\mid r(u)\in \Min U(\{q(t)\mid t\mathrel Ru\})\text{ for all }u\in T\}, \\
          (G*P)(q)(s) & =G\Big(\varphi\big(P(q)\big)\Big)(s)=\Max L(\{r(v)\mid r\in\varphi\big(P(q)\big)\text{ and }s\mathrel Rv\})= \\
                      & =\Max L\big(\bigcup\{\Min U(\{q(t)\mid t\mathrel Rv\})\mid s\mathrel Rv\}\big), \\
                 q(s) & \in L\big(\bigcup\{U(\{q(t)\mid t\mathrel Rv\})\mid s\mathrel Rv\}\big)\subseteq \\
								      & \subseteq L\big(\bigcup\{\Min U(\{q(t)\mid t\mathrel Rv\})\mid s\mathrel Rv\}\big)
\end{align*}
and hence $q(s)\leq_1(G*P)(q)(s)$ according to Observation~\ref{obs1}. Analogously, we have
\begin{align*}
              G(q)(s) & =\Max L(\{q(t)\mid s\mathrel Rt\}), \\
\varphi\big(G(q)\big) & =\{r\in A^T\mid r(u)\in G(q)(u)\text{ for all }u\in T\}= \\
                      & =\{r\in A^T\mid r(u)\in\Max L(\{q(t)\mid u\mathrel Rt\})\text{ for all }u\in T\}, \\
          (P*G)(q)(s) & =P\Big(\varphi\big(G(q)\big)\Big)(s)=\Min U(\{r(v)\mid r\in\varphi\big(G(q)\big)\text{ and }v\mathrel Rs\})= \\
                      & =\Min U\big(\bigcup\{\Max L\big(\{q(t)\mid v\mathrel Rt\})\mid v\mathrel Rs\}\big), \\
                 q(s) & \in U\big(\bigcup\{L(\{q(t)\mid v\mathrel Rt\})\mid v\mathrel Rs\}\big)\subseteq \\
								      & \subseteq U\big(\bigcup\{\Max L(\{q(t)\mid v\mathrel Rt\})\mid v\mathrel Rs\}\big)
\end{align*}
and hence $(P*G)(q)(s)\leq_2q(s)$ according to Observation~\ref{obs1}.
\end{enumerate}
\end{proof}

Similarly one can show that also the couple $(F,H)$ forms a dynamic pair.

\section{Properties of tense operators}

As mentioned in our previous section, our logic based on a finite orthomodular poset is equipped with logical connectives $\odot$ and $\rightarrow$ forming an adjoint pair. The aim of this section is to show that our tense operators satisfy properties considered in classical or non-classical tense logics as collected in \cite{CP15a} although they are defined in an inexact way.

Before formulating of our results, let us illuminate our concepts by the following example.

\begin{example}\label{ex1}
We consider the orthomodular poset $(A,\leq,{}',0,1)$ from Figure~1 and the time frame $(\{1,2,3\},\leq)$. Define $p,q\in A^T$ as follows:
\[
\begin{array}{c|ccc}
  t  & 1  & 2  & 3 \\
\hline
p(t) & i' & i' & f' \\
q(t) & b' & a' & a'
\end{array}
\]
Then
\begin{align*}
                        (p\odot q)(1) & =\Min U(i',b)\wedge b'=i'\wedge b'=d, \\
                        (p\odot q)(2) & =\Min U(i',a)\wedge a'=i'\wedge a'=e, \\
                        (p\odot q)(3) & =\Min U(f',a)\wedge a'=f'\wedge a'=h, \\
			H\big(\varphi(p\odot q)\big)(1) & =\Max L(d)=d, \\
      H\big(\varphi(p\odot q)\big)(2) & =\Max L(d,e)=0, \\
      H\big(\varphi(p\odot q)\big)(3) & =\Max L(d,e,h)=0, \\
                              H(p)(1) & =\Max L(i')=i', \\
                              H(p)(2) & =\Max L(i')=i', \\
                              H(p)(3) & =\Max L(f',i')=\{a,b\}, \\
                              H(q)(1) & =\Max L(b')=b', \\
                              H(q)(2) & =\Max L(a',b')=\{f,i\}, \\
                              H(q)(3) & =\Max L(a',b')=\{f,i\}, \\
          \big(H(p)\odot H(q)\big)(1) & =i'\odot b'=\Min U(i',b)\wedge b'=i'\wedge b'=d, \\
          \big(H(p)\odot H(q)\big)(2) & =i'\odot\{f,i\}= \\
			                                & =\big(\Min U(i',f')\wedge f\big)\cup\big(\Min U(i',i')\wedge i\big)=(1\wedge f)\cup(i'\wedge i)= \\
                                      & =\{0,f\}, \\
          \big(H(p)\odot H(q)\big)(3) & =\{a,b\}\odot\{f,i\}= \\
                                      & =\big(\Min U(a,f')\wedge f\big)\cup\big(\Min U(a,i')\wedge i\big)\cup\big(\Min U(b,f')\wedge f\big)\cup \\
	    						      							& \hspace*{4mm}\cup\big(\Min U(b,i')\wedge i\big)=\{f'\wedge f,i'\wedge i,f'\wedge f,i'\wedge i\}=0, \\
                  (p\rightarrow q)(1) & =i\vee\Max L(i',b')=i\vee d=b', \\
                  (p\rightarrow q)(2) & =i\vee\Max L(i',a')=i\vee e=a', \\
                  (p\rightarrow q)(3) & =f\vee\Max L(f',a')=f\vee h=a', \\
H\big(\varphi(p\rightarrow q)\big)(1) & =\Max L(b')=b', \\
H\big(\varphi(p\rightarrow q)\big)(2) & =\Max L(a',b')=\{f,i\}, \\
H\big(\varphi(p\rightarrow q)\big)(3) & =\Max L(a',b')=\{f,i\}, \\
    \big(H(p)\rightarrow H(q)\big)(1) & =i'\rightarrow b'=i\vee\Max L(i',b')=i\vee d=b', \\
    \big(H(p)\rightarrow H(q)\big)(2) & =i'\rightarrow\{f,i\}=\big(i\vee\Max L(i',f)\big)\cup\big(i\vee\Max L(i',i)\big)= \\
                                      & =\{i\vee0,i\vee0\}=\{i\}, \\
    \big(H(p)\rightarrow H(q)\big)(3) & =\{a,b\}\rightarrow\{f,i\}= \\
                                      & =\big(a'\vee\Max L(a,f)\big)\cup\big(a'\vee\Max L(a,i)\big)\cup\big(b'\vee\Max L(b,f)\big)\cup \\
    																	& \hspace*{4mm}\cup\big(b'\vee\Max L(b,i)\big)=\{a'\vee0,a'\vee0,b'\vee0,b'\vee0\}=\{a',b'\}.
\end{align*}
Moreover,
\begin{align*}
			G\big(\varphi(p\odot q)\big)(1) & =\Max L(d,e,h)=0, \\
      G\big(\varphi(p\odot q)\big)(2) & =\Max L(e,h)=0, \\
      G\big(\varphi(p\odot q)\big)(3) & =\Max L(h)=h, \\
                              G(p)(1) & =\Max L(f',i')=\{a,b\}, \\
                              G(p)(2) & =\Max L(f',i')=\{a,b\}, \\
                              G(p)(3) & =\Max L(f')=f', \\
                              G(q)(1) & =\Max L(a',b')=\{f,i\}, \\
                              G(q)(2) & =\Max L(a')=a', \\
                              G(q)(3) & =\Max L(a')=a', \\
          \big(G(p)\odot G(q)\big)(1) & =\{a,b\}\odot\{f,i\}=0, \\
          \big(G(p)\odot G(q)\big)(2) & =\{a,b\}\odot a'=\big(\Min U(a,a)\wedge a'\big)\cup\big(\Min U(b,a)\wedge a'\big)= \\
					                            & =\{a\wedge a',f'\wedge a',i'\wedge a'\}=\{0,e,h\}, \\
					\big(G(p)\odot G(q)\big)(3) & =f'\odot a'=\Min U(f',a)\wedge a'=f'\wedge a'=h, \\
G\big(\varphi(p\rightarrow q)\big)(1) & =\Max L(a',b')=\{f,i\}, \\
G\big(\varphi(p\rightarrow q)\big)(2) & =\Max L(a')=a', \\
G\big(\varphi(p\rightarrow q)\big)(3) & =\Max L(a')=a', \\
    \big(G(p)\rightarrow G(q)\big)(1) & =\{a,b\}\rightarrow\{f,i\}=\{a',b'\}, \\
    \big(G(p)\rightarrow G(q)\big)(2) & =\{a,b\}\rightarrow a'=\big(a'\vee\Max L(a,a')\big)\cup\big(b'\vee\Max L(b,a')\big)= \\
		                                  & =\{a'\vee0,b'\vee0\}=\{a',b'\}, \\
		\big(G(p)\rightarrow G(q)\big)(3) & =f'\rightarrow a'=f\vee\Max L(f',a')=f\vee h=a'.
\end{align*}
Altogether, we obtain
\[
\begin{array}{c|ccc}
t & 1 & 2 & 3 \\
\hline
   H\big(\varphi(p\odot q)\big)(t)    &     d     &     0     &    0 \\
     \big(H(p)\odot H(q)\big)(t)      &     d     &  \{0,f\}  &    0 \\
H\big(\varphi(p\rightarrow q)\big)(t) &     b'    &  \{f,i\}  & \{f,i\} \\
  \big(H(p)\rightarrow H(q)\big)(t)   &     b'    &     i     &    i \\
   G\big(\varphi(p\odot q)\big)(t)    &     0     &     0     &    h \\
     \big(G(p)\odot G(q)\big)(t)      &     0     & \{0,e,h\} &    h \\
G\big(\varphi(p\rightarrow q)\big)(t) &  \{f,i\}  &     a'    &    a' \\
  \big(G(p)\rightarrow G(q)\big)(t)   & \{a',b'\} & \{a',b'\} &    a'
\end{array}
\]
This shows
\begin{align*}
      H\big(\varphi(p\odot q)\big) & \leq H(p)\odot H(q), \\
H\big(\varphi(p\rightarrow q)\big) & \leq_2H(p)\rightarrow H(q), \\
      G\big(\varphi(p\odot q)\big) & \leq G(p)\odot G(q), \\
G\big(\varphi(p\rightarrow q)\big) & \leq_1G(p)\rightarrow G(q)
\end{align*}
and these inequalities are proper.$\hfill\Box$
\end{example}

We introduce the following notations:

For $p\in A^T$, $x\in(2^A\setminus\{\emptyset\})^T$ and $B\in2^{(A^T)}\setminus\{\emptyset\}$ we define $p'\in A^T$, $x'\in(2^A\setminus\{\emptyset\})^T$ and $B'\in2^{(A^T)}\setminus\{\emptyset\}$ as follows:
\begin{align*}
p'(t) & :=\big(p(t)\big)'\text{ for all }t\in T, \\
x'(t) & :=\{a'\mid a\in x(t)\}\text{ for all }t\in T, \\
   B' & :=\{q'\mid q\in B\}.
\end{align*}
We have $\big(\varphi(x)\big)'=\varphi(x')$ since
\begin{align*}
\big(\varphi(x)\big)' & =\{q'\mid q\in\varphi(x)\}=\{q\in A^T\mid q'\in\varphi(x)\}= \\
                      & =\{q\in A^T\mid\big(q(t)\big)'\in x(t)\text{ for all }t\in T\}=\{q\in A^T\mid q(t)\in x'(t)\text{ for all }t\in T\}= \\
                      & =\varphi(x').
\end{align*}

When combining operators, both tense ones and logical connectives, we must use the transformation function $\varphi$ because of the reasons explained in the previous section. We can state and prove the following propositions.

\begin{proposition}\label{prop3}
Let $(A,\leq,0,1)$ be a finite bounded poset, $(T,R)$ a time frame with reflexive $R$ and $q\in A^T$. Then
\begin{align*}
                  q & \leq(\varphi\circ P)(q), \\
                  q & \leq(\varphi\circ F)(q), \\
(\varphi\circ H)(q) & \leq q, \\
(\varphi\circ G)(q) & \leq q.
\end{align*}
\end{proposition}

\begin{proof}
Let $s\in T$. We have
\begin{align*}
            P(q)(s) & =\Min U(\{q(t)\mid t\mathrel Rs\}), \\
(\varphi\circ P)(q) & =\{p\in A^T\mid p(t)\in\Min U(\{q(u)\mid u\mathrel Rt\})\text{ for all }t\in T\}, \\
                  q & \leq p\text{ for all }p\in(\varphi\circ P)(q), \\
                  q & \leq(\varphi\circ P)(q).
\end{align*}
The result for $F$ is analogous. Now
\begin{align*}
            H(q)(s) & =\Max L(\{q(t)\mid t\mathrel Rs\}), \\
(\varphi\circ H)(q) & =\{p\in A^T\mid p(t)\in\Max L(\{q(u)\mid u\mathrel Rt\})\text{ for all }t\in T\}, \\
                  p & \leq q\text{ for all }p\in(\varphi\circ H)(q), \\
(\varphi\circ H)(q) & \leq q.
\end{align*}
The result for $G$ is analogous.
\end{proof}

\begin{proposition}\label{prop2}
Let $(A,\leq,{}',0,1)$ be a finite orthomodular poset, $(T,R)$ a time frame, $x\in(2^A\setminus\{\emptyset\})^T$, $B,C\in2^{(A^T)}\setminus\{\emptyset\}$ with $B\leq C$ and $s\in T$ and denote by $\varphi$ the transformation function. Then the following hold:
\begin{enumerate}[{\rm(i)}]
\item
\begin{align*}
P\big(\varphi(x)\big)(s) & =\Min U\big(\bigcup\{x(t)\mid t\mathrel Rs\}\big), \\
F\big(\varphi(x)\big)(s) & =\Min U\big(\bigcup\{x(t)\mid s\mathrel Rt\}\big), \\
H\big(\varphi(x)\big)(s) & =\Max L\big(\bigcup\{x(t)\mid t\mathrel Rs\}\big), \\
G\big(\varphi(x)\big)(s) & =\Max L\big(\bigcup\{x(t)\mid s\mathrel Rt\}\big),
\end{align*}
\item $H(B)=P(B')'$ and $G(B)=F(B')'$,
\item $P(B)\leq_2P(C)$, $F(B)\leq_2F(C)$, $H(B)\leq_1H(C)$ and $G(B)\leq_1G(C)$,
\item $H(B)\leq P(B)$ and $G(B)\leq F(B)$.
\end{enumerate}
\end{proposition}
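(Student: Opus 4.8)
The plan is to handle the four items in order, using (i) as the computational backbone, (ii) as a duality that halves the remaining work, and (iii)--(iv) as order-theoretic consequences of two cone inclusions combined with Observation~\ref{obs1}. Throughout I would use that $A$ is finite (so all the occurring $\Min U$ and $\Max L$ are non-empty) and that the cones $U(\cdot)$, $L(\cdot)$ are non-empty since they contain $0$ or $1$.

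For (i) I would simply unwind the definition of $P,F,H,G$ on subsets applied to $B=\varphi(x)$. The only point to verify is the set identity
\[
\{q(t)\mid q\in\varphi(x)\text{ and }t\mathrel Rs\}=\bigcup\{x(t)\mid t\mathrel Rs\},
\]
and its analogue with $s\mathrel Rt$. The inclusion $\subseteq$ is immediate from $q(t)\in x(t)$; for $\supseteq$, given $a\in x(t_0)$ with $t_0\mathrel Rs$, I build a witness $q\in\varphi(x)$ by setting $q(t_0):=a$ and picking $q(u)\in x(u)$ arbitrarily for $u\neq t_0$, which is possible precisely because every $x(u)$ is non-empty. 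Substituting this identity into the definitions yields the four displayed formulas.

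For (ii) I would use only that $'$ is an antitone involution. Writing $S:=\{q(t)\mid q\in B\text{ and }t\mathrel Rs\}$, the relation $y\geq a'\Leftrightarrow a\geq y'$ gives $U(S')=(L(S))'$, and since $'$ reverses order this upgrades to $\Min U(S')=(\Max L(S))'$. Because $(q')(t)=(q(t))'$, the set attached to $B'$ is exactly $S'$, so $P(B')(s)=(\Max L(S))'=(H(B)(s))'$; applying $'$ once more and using $a''=a$ gives $H(B)=P(B')'$, and $G(B)=F(B')'$ follows verbatim with $s\mathrel Rt$ in place of $t\mathrel Rs$.

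For (iii) the crux is a pair of cone inclusions. Setting $S_B:=\{q(t)\mid q\in B,\ t\mathrel Rs\}$ and $S_C$ analogously, I claim $B\leq C$ forces $U(S_C)\subseteq U(S_B)$ and $L(S_B)\subseteq L(S_C)$. For the first, take $y\in U(S_C)$ and $a=p(t)\in S_B$ with $p\in B$; choosing any $q\in C$ (here $C\neq\emptyset$ is used), $B\leq C$ gives $p(t)\leq q(t)\leq y$, so $a\leq y$ and $y\in U(S_B)$; the second inclusion is dual and uses $B\neq\emptyset$. Observation~\ref{obs1} applied to $U(S_C)\subseteq U(S_B)$ yields $\Min U(S_B)\leq_2 U(S_C)$, and restricting the target to $\Min U(S_C)\subseteq U(S_C)$ gives $P(B)(s)\leq_2 P(C)(s)$; applied to $L(S_B)\subseteq L(S_C)$ it yields $\Max L(S_B)\leq_1\Max L(S_C)$, i.e.\ $H(B)(s)\leq_1 H(C)(s)$. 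The $F$ and $G$ cases are identical with $s\mathrel Rt$, and can alternatively be deduced from (ii), since $B\leq C$ is equivalent to $C'\leq B'$ and $'$ interchanges $\leq_1$ and $\leq_2$. Finally, for (iv) I would invoke that $R$ is serial, so some $r$ satisfies $r\mathrel Rs$; with $B\neq\emptyset$ this makes $S_B$ non-empty. Picking any $c\in S_B$, every $a\in L(S_B)$ and every $b\in U(S_B)$ satisfy $a\leq c\leq b$, whence $L(S_B)\leq U(S_B)$, so in particular $\Max L(S_B)\leq\Min U(S_B)$, i.e.\ $H(B)(s)\leq P(B)(s)$; the future case $G(B)\leq F(B)$ uses seriality in the form $s\mathrel Rt$. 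The main obstacle I anticipate is the bookkeeping in (iii): one must keep straight that $\Max L$ produces $\leq_1$ while $\Min U$ produces $\leq_2$, get the direction of the cone inclusions right (upper cones shrink and lower cones grow when passing from $B$ to the larger $C$), and invoke the correct half of Observation~\ref{obs1}; the non-emptiness of $B,C$ and the seriality of $R$ are exactly what make the sandwich arguments valid and are easy to overlook.
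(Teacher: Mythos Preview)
Your proposal is correct and follows essentially the same route as the paper: the set identity underlying (i), the De~Morgan-type duality $\Min U(S')=(\Max L(S))'$ for (ii), the cone inclusions $U(S_C)\subseteq U(S_B)$ and $L(S_B)\subseteq L(S_C)$ fed into Observation~\ref{obs1} for (iii), and the seriality-based sandwich $\Max L(S_B)\leq\{q(u)\mid q\in B\}\leq\Min U(S_B)$ for (iv). If anything, you are more explicit than the paper about why the set identity in (i) holds, about where $B\neq\emptyset$, $C\neq\emptyset$ and seriality are actually used, and you add the (correct) observation that the $F,G$ cases in (iii) can alternatively be read off from (ii); none of this constitutes a different approach.
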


\begin{proof}
\
\begin{enumerate}[(i)]
\item We have
\[
P\big(\varphi(x)\big)(s)=\Min L(\{p(t)\mid p\in\varphi(x)\text{ and }s\mathrel Rt\})=\Min U\big(\bigcup\{x(t)\mid t\mathrel Rs\}\big).
\]
The proof for $F$, $H$ and $G$ is analogous.
\item We have
\begin{align*}
P(B)(s) & =\Min U(\{q(t)\mid q\in B\text{ and }t\mathrel Rs\})=\Min U(\{q'(t)\mid q\in B'\text{ and }t\mathrel Rs\})= \\
        & =\Min U(\{\big(q(t)\big)'\mid q\in B'\text{ and }t\mathrel Rs\})= \\
				& =\big(\Max L(\{q(t)\mid q\in B'\text{ and }t\mathrel RS\}\big)'=H(B')'(s).
\end{align*}
The second assertion can be proved in an analogous way.
\item We have
\begin{align*}
P(C)(s) & =\Min U(\{q(t)\mid q\in C\text{ and }t\mathrel Rs\})\subseteq U(\{q(t)\mid q\in C\text{ and }t\mathrel Rs\})\subseteq \\
        & \subseteq U(\{p(t)\mid p\in B\text{ and }t\mathrel Rs\})
\end{align*}
and hence
\[
P(B)(s)=\Min U(\{p(t)\mid p\in B\text{ and }t\mathrel Rs\})\leq_2 P(C)(s).
\]
Analogously, we obtain $F(B)\leq_2F(C)$. Now
\begin{align*}
H(B)(s) & =\Max L(\{p(t)\mid p\in B\text{ and }t\mathrel Rs\})\subseteq L(\{p(t)\mid p\in B\text{ and }t\mathrel Rs\})\subseteq \\
        & \subseteq L(\{q(t)\mid q\in C\text{ and }t\mathrel Rs\})
\end{align*}
and hence
\[
H(B)(s)\leq_1\Max L(\{q(t)\mid q\in C\text{ and }t\mathrel Rs\})=H(C)(s).
\]
Analogously, we obtain $G(B)\leq_1G(C)$.
\item Since $R$ is serial there exists some $u\in T$ with $u\mathrel Rs$. We have
\begin{align*}
H(B)(s) & =\Max L(\{q(t)\mid q\in B\text{ and }t\mathrel Rs\})\leq\{q(u)\mid q\in B\}\leq \\
        & \leq\Min U(\{q(t)\mid q\in B\text{ and }t\mathrel Rs\})=P(B)(s).
\end{align*}
The second assertion follows analogously.
\end{enumerate}
\end{proof}

By (ii) of Proposition~\ref{prop2} we see that the tense operators $P$ and $F$ are fully determined by means of $H$ and $G$, respectively. Condition (iii) of Proposition~\ref{prop2} shows monotonicity of all tense operators in a finite orthomodular poset.

In the next theorem we verify that the tense operators $P$, $F$, $H$ and $G$ as defined in Section~2 satisfy the composition laws in accordance with known sources, see e.g. \cite{Bu}, \cite{CP15a} or \cite{FGV}.

\begin{theorem}\label{th2}
Let $(A,\leq,0,1)$ be a finite bounded poset and $(T,R)$ a time frame with reflexive $R$. Then
\[
\begin{array}{llll}
P\leq_2P*F, & F\leq_2F*P, & H\leq_1H*P, & G\leq_1G*P, \\
P*H\leq_2P, & F*H\leq_2F, & H\leq_1H*F, & G\leq_1G*F, \\
P*G\leq_2P, & F*G\leq_2F, & H*G\leq_1H, & G*H\leq_1G.
\end{array}
\]
\end{theorem}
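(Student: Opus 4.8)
The plan is to prove all twelve inequalities by a single uniform argument that reduces each composite law to two facts already available: the ``reflexivity inequalities'' of Proposition~\ref{prop3} and the monotonicity of the lifted tense operators from Proposition~\ref{prop2}(iii). The point is that a composition $X*Y$ unfolds as $X\big(\varphi(Y(q))\big)$, where the inner datum $\varphi(Y(q))\in2^{(A^T)}\setminus\{\emptyset\}$ is comparable to the singleton $\{q\}$ by Proposition~\ref{prop3}, and the outer lifted operator $X$ then transports that comparison into the appropriate quasiorder $\leq_1$ or $\leq_2$.

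First I would record the two ingredients in the exact form needed. From Proposition~\ref{prop3} (which is where reflexivity of $R$ enters) we obtain, for every $q\in A^T$,
\[
\{q\}\leq\varphi(P(q)),\quad\{q\}\leq\varphi(F(q)),\quad\varphi(H(q))\leq\{q\},\quad\varphi(G(q))\leq\{q\},
\]
reading $q$ as the singleton $\{q\}$ and interpreting $\leq$ by the subset convention ($B\leq C$ iff $b\leq c$ for all $b\in B$ and all $c\in C$). From Proposition~\ref{prop2}(iii), whenever $B\leq C$ in $2^{(A^T)}\setminus\{\emptyset\}$ we have $P(B)\leq_2P(C)$ and $F(B)\leq_2F(C)$, while $H(B)\leq_1H(C)$ and $G(B)\leq_1G(C)$. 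I would remark that, although Proposition~\ref{prop2} is stated for an orthomodular poset, the proof of part~(iii) uses only inclusion-monotonicity of $U$ and $L$ together with Observation~\ref{obs1}, so it is valid for any finite bounded poset; the same applies to Proposition~\ref{prop3}. Finally I would invoke the identification $P(\{q\})=P(q)$, and likewise for $F$, $H$, $G$, which is immediate from the lifted definitions and the singleton convention.

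With these in hand each law becomes one line. For the ``growth'' laws I feed the outer operator an inner operator that increases $q$: e.g.\ from $\{q\}\leq\varphi(F(q))$, monotonicity of the lifted $P$ gives $P(q)=P(\{q\})\leq_2P\big(\varphi(F(q))\big)=(P*F)(q)$, which is $P\leq_2P*F$; the cases $F\leq_2F*P$, $H\leq_1H*P$, $G\leq_1G*P$, $H\leq_1H*F$, $G\leq_1G*F$ are identical, the outer operator alone fixing whether the conclusion is $\leq_2$ (for $P,F$) or $\leq_1$ (for $H,G$). For the ``shrink'' laws I feed the outer operator an inner operator that decreases $q$: e.g.\ from $\varphi(H(q))\leq\{q\}$, monotonicity of the lifted $P$ gives $(P*H)(q)=P\big(\varphi(H(q))\big)\leq_2P(\{q\})=P(q)$, which is $P*H\leq_2P$; the remaining cases $F*H\leq_2F$, $P*G\leq_2P$, $F*G\leq_2F$, $H*G\leq_1H$, $G*H\leq_1G$ run the same way.

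The only real care needed, and hence the step I expect to be the main bookkeeping obstacle rather than a conceptual one, is matching three independent choices correctly: the inner operator decides the direction of the basic inequality ($P,F$ push $q$ up and $H,G$ push it down, via Proposition~\ref{prop3}), the outer operator decides the resulting quasiorder ($\leq_2$ for $P,F$ and $\leq_1$ for $H,G$, via Proposition~\ref{prop2}(iii)), and the order in which $\{q\}$ and $\varphi(Y(q))$ are supplied to the monotonicity statement must agree with both. I would tabulate the twelve triples once to be certain no direction is flipped, and I would double-check the hypothesis transfer (orthomodularity is not used by either proposition invoked) so that the theorem's weaker assumption of a finite bounded poset is legitimately met.
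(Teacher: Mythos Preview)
Your proposal is correct and follows essentially the same route as the paper's proof: invoke Proposition~\ref{prop3} for the comparison between $q$ and $\varphi(Y(q))$, then apply the monotonicity in Proposition~\ref{prop2}(iii) with the outer operator. Your write-up is in fact more careful than the paper's, since you explicitly justify why Proposition~\ref{prop2}(iii) applies under the weaker hypothesis of a finite bounded poset.
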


\begin{proof}
Let $q\in A^T$. According to Proposition~\ref{prop3} we have $q\leq(\varphi\circ F)(q)$. Hence by (iii) of Proposition~\ref{prop2} we obtain
\[
P(q)\leq_2P\big((\varphi\circ F)(q)\big)=(P\circ\varphi\circ F)(q)=(P*F)(q).
\]
This shows $P\leq_2P*F$. The remaining inequalities can be proved analogously.
\end{proof}

However, in some cases we can prove results which are stronger than those of Theorem~\ref{th2}.

\begin{theorem}\label{th1}
Let $(A,\leq,0,1)$ be a finite bounded poset, $(T,R)$ a time frame with reflexive $R$ and $X\in\{P,F,H,G\}$. Then
\[
H*X,G*X\leq X\leq P*X,F*X,
\]
especially
\[
P\leq P*P, F\leq F*F, H*H\leq H, G*G\leq G.
\]
\end{theorem}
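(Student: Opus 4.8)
The statement $H*X,G*X\leq X\leq P*X,F*X$ asserts four chains of inclusions, each valid for every choice of $X\in\{P,F,H,G\}$, giving twelve comparisons in total. My plan is to reduce everything to two ingredients already established in the excerpt: the behaviour of $\varphi$ composed with a single tense operator (Proposition~\ref{prop3}), and the monotonicity of the lifted tense operators with respect to $\leq_1$ and $\leq_2$ (part~(iii) of Proposition~\ref{prop2}). The key observation is that each composition $Y*X$ unfolds as $Y\circ\varphi\circ X$, so the inner expression $(\varphi\circ X)(q)$ is a set in $2^{(A^T)}\setminus\{\emptyset\}$ that I can compare to the singleton $\{X\text{-value}\}$ via Proposition~\ref{prop3}, and then push the comparison through the outer operator $Y$ using its monotonicity.

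\textbf{First steps.}
I would fix $q\in A^T$ and treat the four inequalities one at a time, but with a uniform template. For the bound $X\leq P*X$: by Proposition~\ref{prop3} (applied with $P$ in the role there) we have $q\leq(\varphi\circ P)(q)$; more to the point, I want the relation between $X(q)$ and $(P*X)(q)=(P\circ\varphi\circ X)(q)$. The correct reading is that $X(q)$, viewed as an element of $A^T$ (a singleton in $2^{(A^T)}\setminus\{\emptyset\}$), sits inside $(\varphi\circ X)(q)$ because $\varphi$ recovers every pointwise selection; then I apply $P$ and invoke monotonicity. Concretely, since $\{X(q)\}\subseteq(\varphi\circ X)(q)$ gives $\{X(q)\}\leq(\varphi\circ X)(q)$ in the appropriate quasiorder, part~(iii) of Proposition~\ref{prop2} yields $P(\{X(q)\})\leq_2 P((\varphi\circ X)(q))=(P*X)(q)$, and combining with the single-operator bound $q\leq(\varphi\circ P)$ delivers $X\leq P*X$. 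The cases $X\leq F*X$, $H*X\leq X$, and $G*X\leq X$ follow by the same template, using the $F$, $H$, $G$ rows of Proposition~\ref{prop3} and matching the quasiorder ($\leq_1$ for $H,G$ as the outer operator, $\leq_2$ for $P,F$) to the direction supplied by part~(iii).

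\textbf{The specializations.}
Once the four general chains are in place, the displayed special cases $P\leq P*P$, $F\leq F*F$, $H*H\leq H$, $G*G\leq G$ are immediate: set $X$ equal to the operator appearing on the outside. For $P\leq P*P$ take $X=P$ in $X\leq P*X$; for $F\leq F*F$ take $X=F$ in $X\leq F*X$; for $H*H\leq H$ take $X=H$ in $H*X\leq X$; and for $G*G\leq G$ take $X=G$ in $G*X\leq X$. No further argument is needed.

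\textbf{The main obstacle.}
The delicate point is bookkeeping of the quasiorders. The operators land in $(2^A\setminus\{\emptyset\})^T$, $\varphi$ lifts into $2^{(A^T)}\setminus\{\emptyset\}$, and the comparisons $\leq_1$, $\leq_2$, $\leq$ are distinct relations that are only partially compatible. I must verify in each of the twelve cases that the direction of the single-operator bound from Proposition~\ref{prop3} (which is stated in $\leq$) is preserved when I feed it through the outer operator under Proposition~\ref{prop2}(iii) (which is stated in $\leq_1$ or $\leq_2$), and that the resulting quasiorder is exactly the one claimed in the theorem (bare $\leq$). The subtlety is that $\leq$ implies both $\leq_1$ and $\leq_2$ (as noted after Proposition~\ref{prop1}), so the chain of implications runs the right way, but I should state explicitly which quasiorder each link uses so that the final conclusion in $\leq$ is legitimate rather than merely a weaker $\leq_1$ or $\leq_2$ statement. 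Handling this compatibility cleanly—rather than the routine symbol-pushing—is where the care lies.
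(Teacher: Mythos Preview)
Your plan has a genuine gap, and you have in fact already put your finger on it in the ``main obstacle'' paragraph without resolving it. Proposition~\ref{prop2}(iii) delivers only $\leq_1$ or $\leq_2$ conclusions, never the bare $\leq$. You note that $\leq$ implies both $\leq_1$ and $\leq_2$, but that is the wrong direction: to land on the theorem's conclusion you would need $\leq_i\Rightarrow\leq$, which is false in general. In fact the route you sketch---feed the single-operator bound of Proposition~\ref{prop3} through the monotonicity of Proposition~\ref{prop2}(iii)---is exactly how the paper proves Theorem~\ref{th2}, whose conclusions are stated in $\leq_1$ and $\leq_2$. Theorem~\ref{th1} is explicitly advertised as \emph{stronger} than Theorem~\ref{th2}, so a proof template that reproduces Theorem~\ref{th2} cannot suffice.

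There is also a type mismatch in your ``first steps''. You write ``$X(q)$, viewed as an element of $A^T$'' and ``$\{X(q)\}\subseteq(\varphi\circ X)(q)$'', but $X(q)$ lives in $(2^A\setminus\{\emptyset\})^T$, not in $A^T$: the values $X(q)(s)$ are in general multi-element antichains, so $X(q)$ is not one of the selections in $(\varphi\circ X)(q)$. The paper's argument avoids both problems by computing $(Y*X)(q)(s)$ directly: using Proposition~\ref{prop2}(i) one has, for instance,
\[
(P*X)(q)(s)=\Min U\Big(\bigcup\{X(q)(t)\mid t\mathrel Rs\}\Big),
\]
and reflexivity of $R$ puts $X(q)(s)$ inside the union, so every element of the $\Min U$ on the right dominates every element of $X(q)(s)$; this is precisely the set-level relation $X(q)(s)\leq(P*X)(q)(s)$. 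The $F$, $H$, $G$ cases are identical with the obvious changes. No appeal to monotonicity or to Proposition~\ref{prop3} is needed, and the strong $\leq$ falls out immediately.
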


\begin{proof}
If $q\in A^T$ and $s\in T$ then
\[
(\varphi\circ X)(q)=\{r\in A^T\mid r(u)\in X(q)(u)\text{ for all }u\in T\}
\]
and hence
\begin{align*}
(P*X)(q)(s) & =\Min U(\bigcup\{X(q)(t)\mid t\mathrel Rs\})\geq X(q)(s), \\
(F*X)(q)(s) & =\Min U(\bigcup\{X(q)(t)\mid s\mathrel Rt\})\geq X(q)(s), \\
(H*X)(q)(s) & =\Max L(\bigcup\{X(q)(t)\mid t\mathrel Rs\})\leq X(q)(s), \\
(G*X)(q)(s) & =\Max L(\bigcup\{X(q)(t)\mid s\mathrel Rt\})\leq X(q)(s).
\end{align*}
\end{proof}

That $P*P=P$ does not hold in general can be seen from the following example.

\begin{example}
Consider the orthomodular poset $(A,\leq,{}',0,1)$ from Figure~1 and the time frame $(\{1,2,3\},\leq)$ as in Example~\ref{ex1}. Define $r\in A^T$ as follows:
\[
\begin{array}{c|ccc}
  t  & 1 & 2 & 3 \\
\hline
r(t) & a & b & b
\end{array}
\]
Then
\begin{align*}
    P(r)(1) & =\Min U(a)=a, \\
    P(r)(2) & =\Min U(a,b)=\{f',i'\}, \\
    P(r)(3) & =\Min U(a,b)=\{f',i'\}, \\
(P*P)(r)(1) & =\Min U(a)=a, \\
(P*P)(r)(2) & =\Min U(a,f',i')=1, \\
(P*P)(r)(3) & =\Min U(a,f',i')=1.
\end{align*}
and hence
\[
\begin{array}{c|ccc}
    t    & 1 &     2     &     3 \\
\hline
  P(r)   & a & \{f',i'\} & \{f',i'\} \\
(P*P)(r) & a &     1     &     1
\end{array}
\]
$\hfill\Box$
\end{example}

In the following we show that also preserving of the connective $\rightarrow$ with respect to $H$ or $G$ can be derived by the corresponding property for $\odot$.

\begin{lemma}\label{lem1}
Let $(A,\leq,{}',0,1)$ be a finite orthomodular poset, $\odot$ and $\rightarrow$ defined by {\rm(13)} and $B,C,D\in2^A\setminus\{\emptyset\}$. Then $B\odot C\sqsubseteq D$ is equivalent to $B\sqsubseteq C\rightarrow D$.
\end{lemma}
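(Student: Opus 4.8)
The plan is to reduce the subset statement to the element-wise adjunction $a\odot b\sqsubseteq c \Leftrightarrow a\sqsubseteq b\rightarrow c$ (for $a,b,c\in A$) already recorded from \cite{CL}, exploiting the fact that the relation $\sqsubseteq$ is purely existential and therefore commutes with the unions that define $\odot$ and $\rightarrow$ on non-empty subsets.

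First I would record the two distributivity facts that make everything work. Since $X\sqsubseteq Y$ holds iff there exist $x\in X$ and $y\in Y$ with $x\leq y$, one immediately has $\bigcup_i X_i\sqsubseteq Y$ iff $X_i\sqsubseteq Y$ for some $i$, and dually $X\sqsubseteq\bigcup_j Y_j$ iff $X\sqsubseteq Y_j$ for some $j$. Both are read off directly from the definition of $\sqsubseteq$.

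Next I would unfold the left-hand side. Since $B\odot C=\bigcup\{b\odot c\mid b\in B,c\in C\}$, the first distributivity shows that $B\odot C\sqsubseteq D$ is equivalent to the existence of $b\in B$ and $c\in C$ with $b\odot c\sqsubseteq D$; unfolding $\sqsubseteq$ once more over the elements of $D$ (identifying $d$ with $\{d\}$) rewrites this as the existence of $b\in B$, $c\in C$ and $d\in D$ with $b\odot c\sqsubseteq d$. Symmetrically, using $C\rightarrow D=\bigcup\{c\rightarrow d\mid c\in C,d\in D\}$ together with the second distributivity, $B\sqsubseteq C\rightarrow D$ is equivalent to the existence of $b\in B$, $c\in C$ and $d\in D$ with $b\sqsubseteq c\rightarrow d$. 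Finally I would apply the element-wise adjunction: for fixed $b,c,d\in A$ one has $b\odot c\sqsubseteq d$ iff $b\sqsubseteq c\rightarrow d$, and quantifying this equivalence over all $b\in B$, $c\in C$, $d\in D$ matches the two reformulations, giving $B\odot C\sqsubseteq D$ iff $B\sqsubseteq C\rightarrow D$.

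I do not anticipate a genuine obstacle; the single point requiring care is that $\sqsubseteq$ must be treated as an existential relation throughout, so that it factors through the unions in the definitions of the extended $\odot$ and $\rightarrow$ on \emph{both} arguments. Had one attempted the same argument with the stronger relations $\leq$, $\leq_1$ or $\leq_2$, the unions would not distribute and the reduction would break down; it is precisely the weakness of $\sqsubseteq$ that lets the adjunction lift verbatim from elements to arbitrary non-empty subsets.
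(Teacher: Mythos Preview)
Your proposal is correct and follows essentially the same approach as the paper: both proofs reduce the subset-level statement to the element-wise adjunction $b\odot c\sqsubseteq d \Leftrightarrow b\sqsubseteq c\rightarrow d$ from \cite{CL} by unpacking the existential relation $\sqsubseteq$ through the unions defining the extended $\odot$ and $\rightarrow$. The only cosmetic difference is that you phrase the unpacking once as a general ``distributivity of $\sqsubseteq$ over unions'' principle, whereas the paper carries out the forward and backward directions by hand, explicitly exhibiting the witnesses $a\in b\odot c$ and $e\in c\rightarrow d$.
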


\begin{proof}
First assume $B\odot C\sqsubseteq D$. Then there exist $b\in B$, $c\in C$, $d\in D$ and $a\in b\odot c$ with $a\leq d$. Hence $b\odot c\sqsubseteq d$. By operator left adjointness this implies $b\sqsubseteq c\rightarrow d$, i.e.\ there exists some $e\in c\rightarrow d$ with $b\leq e$. Since $b\in B$ and $e\in C\rightarrow D$ we conclude $B\sqsubseteq C\rightarrow D$. Conversely, assume $B\sqsubseteq C\rightarrow D$. Then there exist $b\in B$, $c\in C$, $d\in D$ and $e\in c\rightarrow d$ with $b\leq e$. Hence $b\sqsubseteq c\rightarrow d$. By operator left adjointness this implies $b\odot c\sqsubseteq d$, i.e.\ there exists some $f\in b\odot c$ with $f\leq d$. Since $f\in B\odot C$ and $d\in D$ we conclude $B\odot C\sqsubseteq D$.
\end{proof}

\begin{theorem}\label{th3}
Let $(A,\leq,{}',0,1)$ be a finite orthomodular poset, $(T,R)$ a time frame and $X,Y,Z\in\{P,F,H,G\}$. Then the following hold:
\begin{enumerate}[{\rm(i)}]
\item Put $i:=1$ if $Z\in\{H,G\}$ and $i:=2$ otherwise and assume
\[
X\big(\varphi(x)\big)\odot Y(q)\leq_iZ\big(\varphi(x\odot q)\big)\text{ for all }x\in(2^A\setminus\{\emptyset\})^T\text{ and all }q\in A^T.
\]
Then
\[
X\big(\varphi(p\rightarrow q)\big)\sqsubseteq Y(p)\rightarrow Z(q)\text{ for all }p,q\in A^T.
\]
\item Put $i:=1$ if $X\in\{H,G\}$ and $i:=2$ otherwise and assume
\[
X\big(\varphi(p\rightarrow x)\big)\leq_iY(p)\rightarrow Z\big(\varphi(x)\big)\text{ for all }p\in A^T\text{ and all }x\in(2^A\setminus\emptyset\})^T.
\]
Then
\[
X(p)\odot Y(q)\sqsubseteq Z\big(\varphi(p\odot q)\big)\text{ for all }p,q\in A^T.
\]
\end{enumerate}
\end{theorem}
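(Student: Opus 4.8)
The plan is to derive each implication from its hypothesis by passing through the adjunction of Lemma~\ref{lem1}, applied componentwise over $T$. Note that $\odot$, $\rightarrow$ and $\sqsubseteq$ on $(2^A\setminus\{\emptyset\})^T$ are all defined pointwise, so the equivalence $B\odot C\sqsubseteq D\Leftrightarrow B\sqsubseteq C\rightarrow D$ of Lemma~\ref{lem1} lifts from a single time instant to all of $T$. The two parts are mirror images of one another: in (i) the hypothesis is phrased with $\odot$ and the goal with $\rightarrow$, whereas in (ii) these roles are swapped. In both cases the bridge between hypothesis and goal is a single instantiation of the assumed inequality, followed by the observation that the argument of the tense operator so produced can be compared, via the divisibility identity $(x\rightarrow y)\odot x\approx\Max L(x,y)$ in part (i) or via Lemma~\ref{lem3}(i) in part (ii), to the term occurring in the goal. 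Throughout I will use that $\leq_1$ and $\leq_2$ are transitive and that, on nonempty sets, each of them implies $\sqsubseteq$.

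For part (i), I would first rewrite the goal $X\big(\varphi(p\rightarrow q)\big)\sqsubseteq Y(p)\rightarrow Z(q)$ by Lemma~\ref{lem1} as $X\big(\varphi(p\rightarrow q)\big)\odot Y(p)\sqsubseteq Z(q)$. Instantiating the hypothesis with $p\rightarrow q$ in place of $x$ and $p$ in place of its second argument yields $X\big(\varphi(p\rightarrow q)\big)\odot Y(p)\leq_i Z\big(\varphi((p\rightarrow q)\odot p)\big)$. By divisibility $(p\rightarrow q)\odot p=\Max L(p,q)$, and since every maximal lower bound of $p,q$ lies below $q$ we get $\Max L(p,q)\leq q$ in $(2^A\setminus\{\emptyset\})^T$. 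Then Lemma~\ref{lem2}(iii) gives $\varphi\big(\Max L(p,q)\big)\leq\varphi(q)$, and the monotonicity statement Proposition~\ref{prop2}(iii) — which delivers $\leq_1$ exactly for $Z\in\{H,G\}$ and $\leq_2$ otherwise, matching the choice of $i$ — yields $Z\big(\varphi(\Max L(p,q))\big)\leq_i Z(q)$. Transitivity of $\leq_i$ gives $X\big(\varphi(p\rightarrow q)\big)\odot Y(p)\leq_i Z(q)$, hence $\sqsubseteq$, and a final application of Lemma~\ref{lem1} returns the desired conclusion.

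Part (ii) runs dually. I would rewrite the goal $X(p)\odot Y(q)\sqsubseteq Z\big(\varphi(p\odot q)\big)$ by Lemma~\ref{lem1} as $X(p)\sqsubseteq Y(q)\rightarrow Z\big(\varphi(p\odot q)\big)$. Instantiating the hypothesis with $q$ in place of its first argument $p$ and $p\odot q$ in place of $x$ produces $X\big(\varphi(q\rightarrow(p\odot q))\big)\leq_i Y(q)\rightarrow Z\big(\varphi(p\odot q)\big)$. By Lemma~\ref{lem3}(i) we have $p\leq q\rightarrow(p\odot q)$, so Lemma~\ref{lem2}(iii) gives $\varphi(p)\leq\varphi\big(q\rightarrow(p\odot q)\big)$, and Proposition~\ref{prop2}(iii) — now governed by $X$, again matching the definition of $i$ — yields $X(p)\leq_i X\big(\varphi(q\rightarrow(p\odot q))\big)$. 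Chaining these two inequalities by transitivity of $\leq_i$ gives $X(p)\leq_i Y(q)\rightarrow Z\big(\varphi(p\odot q)\big)$, hence $\sqsubseteq$, and Lemma~\ref{lem1} converts this back to the claimed inequality.

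The routine manipulations are harmless, so the step that needs care is the bookkeeping of the two quasiorders: one must check that the index $i$ prescribed in each part coincides with the quasiorder produced by the relevant instance of Proposition~\ref{prop2}(iii), namely $\leq_1$ for $H,G$ and $\leq_2$ for $P,F$, and that the operator whose monotonicity is invoked is indeed the one on which $i$ is defined — $Z$ in part (i) and $X$ in part (ii). The second point to keep straight is that Lemma~\ref{lem1} is stated for subsets of $A$, so its use on elements of $(2^A\setminus\{\emptyset\})^T$ must be justified componentwise; once the pointwise definitions of $\odot$, $\rightarrow$ and $\sqsubseteq$ are recalled this is immediate.
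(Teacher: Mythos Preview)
Your proposal is correct and follows essentially the same route as the paper's proof: in part (i) you instantiate the hypothesis with $x=p\rightarrow q$ and the second argument $p$, invoke divisibility $(p\rightarrow q)\odot p=\Max L(p,q)$, pass to $Z(q)$ via Lemma~\ref{lem2} and Proposition~\ref{prop2}(iii), and close with Lemma~\ref{lem1}; in part (ii) you instantiate with $q$ and $x=p\odot q$, use Lemma~\ref{lem3}(i) and Proposition~\ref{prop2}(iii) to reach the hypothesis, and again close with Lemma~\ref{lem1}. Your explicit check that the index $i$ matches the quasiorder delivered by Proposition~\ref{prop2}(iii) for the relevant operator ($Z$ in (i), $X$ in (ii)) and your remark on the componentwise lifting of Lemma~\ref{lem1} make the bookkeeping more transparent than in the paper, but the argument is the same.
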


\begin{proof}
Let $p,q\in A$ and $x\in2^A\setminus\{\emptyset\}$.
\begin{enumerate}[(i)]
\item Because of divisibility of the operator residuated structure $(A,\leq,\odot,\rightarrow,0,1)$ we have
\[
X\big(\varphi(p\rightarrow q)\big)\odot Y(p)\leq_iZ\Big(\varphi\big((p\rightarrow q)\odot p\big)\Big)=Z\Big(\varphi\big(\Max L(p,q)\big)\Big).
\]
Now $\Max L(p,q)\leq q$ implies $\varphi\big(\Max L(p,q)\big)\leq q$ according to Lemma~\ref{lem2} whence
\[
X\big(\varphi(p\rightarrow q)\big)\odot Y(p)\leq_iZ\Big(\varphi\big(\Max L(p,q)\big)\Big)\leq_iZ(q)
\]
according to (iii) of Proposition~\ref{prop2}. Hence $X\big(\varphi(p\rightarrow q)\big)\odot Y(p)\sqsubseteq Z(q)$. Operator left adjointness (cf.\ \cite{CL}) and Lemma~\ref{lem1} yields
\[
X\big(\varphi(p\rightarrow q)\big)\sqsubseteq Y(p)\rightarrow Z(q).
\]
\item
Because of (i) of Lemma~\ref{lem3} and (iii) of Proposition~\ref{prop2} we obtain
\[
X(p)\leq_iX\Big(\varphi\big(q\rightarrow(p\odot q)\big)\Big)\leq_iY(q)\rightarrow Z\big(\varphi(p\odot q)\big)
\]
and therefore
\[
X(p)\sqsubseteq Y(q)\rightarrow Z\big(\varphi(p\odot q)\big).
\]
Operator left adjointness (cf.\ \cite{CL}) and Lemma~\ref{lem1} yields
\[
X(p)\odot Y(q)\sqsubseteq Z\big(\varphi(p\odot q)\big).
\]
\end{enumerate}
\end{proof}

\begin{remark}
Theorem~\ref{th3} implies that the assertions {\rm(1)} -- {\rm(6)} and {\rm(7)} -- {\rm(12)}, respectively, are in some sense equivalent.
\end{remark}

\section{How to construct a preference relation}

In this section we will present a construction of a binary relation $R$ on $T$ for a given time set $T$ and $\mathbf A$ equipped with the tense operators $P$, $F$, $H$ and $G$.

Let $\mathbf A=(A,\leq,{}',0,1)$ be a finite orthomodular poset, $T$ a time set, $(T,R)$ the time frame. We say that the tense operators $P$, $F$, $H$ and $G$ are {\em constructed by $(T,R)$} if for each $q\in A^T$ and each $s\in T$
\begin{align*}
P(q)(s) & :=\Min U(\{q(t)\mid t\mathrel Rs\}), \\
F(q)(s) & :=\Min U(\{q(t)\mid s\mathrel Rt\}), \\
H(q)(s) & :=\Max L(\{q(t)\mid t\mathrel Rs\}), \\
G(q)(s) & :=\Max L(\{q(t)\mid s\mathrel Rt\}).
\end{align*}
Since $(P,G)$ as well as $(F,H)$ form a dynamic pair, the quintuple $(\mathbf A,P,F,H,G)$ is called a {\em dynamic orthomodular poset}.

Conversely, let $(\mathbf A,P,F,H,G)$ be a dynamic orthomodular poset and $T$ a given time set. The question is how to construct a binary relation $R^*$ on $T$ such that $P$, $F$, $H$ and $G$ are constructed by $(T,R^*)$.

Let $(P,\leq)$ be a poset. On $2^P\setminus\{\emptyset\}$ we define two binary relations $\approx_1$ and $\approx_2$ as follows:
\begin{align*}
A\approx_1B & \text{ if }A\leq_1B\text{ and }B\leq_1A, \\
A\approx_2B & \text{ if }A\leq_2B\text{ and }B\leq_2A
\end{align*}
($A,B\subseteq P$). Since $\leq_1$ and $\leq_2$ are quasiorder relations on $2^P\setminus\{\emptyset\}$, $\approx_1$ and $\approx_2$ are the corresponding equivalence relations on $2^P\setminus\{\emptyset\}$.

Define a binary relation $R^*$ on $T$ as follows:
\begin{enumerate}
\item[(14)] $s\mathrel{R^*}t$ if both $G(q)(s)\leq q(t)\leq F(q)(s)$ and $H(q)(t)\leq q(s)\leq P(q)(t)$ \\
hold for each $q\in A^T$.
\end{enumerate}

\begin{theorem}
Let $(A,\leq,{}',0,1)$ be a finite orthomodular poset, $(T,R)$ a time frame and $P$, $F$, $H$ and $G$ the tense operators constructed by $(T,R)$. Let $R^*$ be the relation defined by {\rm(14)}. Then $R\subseteq R^*$. Let $X\in\{P,F,H,G\}$, $q\in A^T$ and $s\in T$, let $X^*$ denote the tense operator constructed by $(T,R^*)$ corresponding to $X$ and put $i:=1$ if $X\in\{H,G\}$ and $i:=2$ otherwise. Then
\[
X(q)(s)\approx_iX^*(q)(s).
\]
\end{theorem}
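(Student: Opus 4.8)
The plan is to prove the two assertions in turn: first the containment $R\subseteq R^*$ by directly checking the four clauses of (14), and then, for each operator $X$, the equivalence $X(q)(s)\approx_i X^*(q)(s)$ by squeezing the two values between each other, using the inclusion $R\subseteq R^*$ for the ``monotone'' inequality and the defining property (14) of $R^*$ for the reverse one.

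For $R\subseteq R^*$ I would fix $s,t$ with $s\mathrel Rt$ and an arbitrary $q\in A^T$. Since $s\mathrel Rt$, the element $q(t)$ lies in $\{q(u)\mid s\mathrel Ru\}$. Every member of $\Max L(B)$ belongs to $L(B)$ and is therefore $\leq$ every element of $B$; dually every member of $\Min U(B)$ is $\geq$ every element of $B$. Applied to $B=\{q(u)\mid s\mathrel Ru\}$ this gives $G(q)(s)\leq q(t)\leq F(q)(s)$, and applied to $\{q(u)\mid u\mathrel Rt\}$ (which contains $q(s)$) it gives $H(q)(t)\leq q(s)\leq P(q)(t)$. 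Thus all four clauses of (14) hold, so $s\mathrel{R^*}t$.

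For the equivalence I would treat $X=G$ (so $i=1$) in detail and note the others are symmetric. Write $S:=\{q(t)\mid s\mathrel Rt\}$ and $S^*:=\{q(t)\mid s\mathrel{R^*}t\}$, so $G(q)(s)=\Max L(S)$, $G^*(q)(s)=\Max L(S^*)$, and $S\subseteq S^*$ by $R\subseteq R^*$. For $G^*(q)(s)\leq_1 G(q)(s)$: from $S\subseteq S^*$ we get $L(S^*)\subseteq L(S)$, so Observation~\ref{obs1} (with $B=L(S^*)$, $C=L(S)$) yields $L(S^*)\leq_1\Max L(S)$, and since $\Max L(S^*)\subseteq L(S^*)$ this gives $\Max L(S^*)\leq_1\Max L(S)$. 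For the reverse $G(q)(s)\leq_1 G^*(q)(s)$ I would invoke (14): for every $t$ with $s\mathrel{R^*}t$ the clause $G(q)(s)\leq q(t)$ holds, so each element of $\Max L(S)$ is $\leq q(t)$ for all such $t$, i.e.
\[
\Max L(S)\subseteq L(S^*);
\]
then Observation~\ref{obs1} (with $B=\Max L(S)\subseteq L(S^*)=C$) gives $\Max L(S)\leq_1\Max L(S^*)$. Combining the two inequalities yields $G(q)(s)\approx_1 G^*(q)(s)$.

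The remaining three operators go through identically, each with its tailored clause of (14). For $F$ ($i=2$) one uses $q(t)\leq F(q)(s)$ to obtain $\Min U(S)\subseteq U(S^*)$ and the $\leq_2$ half of Observation~\ref{obs1}. For $H$ ($i=1$) and $P$ ($i=2$) one replaces $S,S^*$ by the backward sets $\{q(t)\mid t\mathrel Rs\}$ and $\{q(t)\mid t\mathrel{R^*}s\}$ and uses the clauses $H(q)(s)\leq q(t)$, resp.\ $q(t)\leq P(q)(s)$, read off from (14) with the roles of $s,t$ interchanged (i.e.\ from $t\mathrel{R^*}s$). The only genuinely substantive step in each case is this reverse inequality, where one must show the original tense value is still a bound for the enlarged indexing set; but this is exactly what the two-sided definition (14) is engineered to supply, so once the correct clause and the correct index $i$ are matched to each operator the inclusion $\Max L(S)\subseteq L(S^*)$ (resp.\ $\Min U(S)\subseteq U(S^*)$) is immediate and Observation~\ref{obs1} finishes. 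The main care is thus purely bookkeeping: pairing each of $P,F,H,G$ with the right clause of (14) and keeping the directions of $\leq_1,\leq_2$ straight.
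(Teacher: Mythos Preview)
Your proof is correct and follows essentially the same approach as the paper's own proof: first verifying the four clauses of (14) directly from the definitions to get $R\subseteq R^*$, then for each operator obtaining one $\leq_i$-inequality from the inclusion $R\subseteq R^*$ (via $L(S^*)\subseteq L(S)$, resp.\ $U(S^*)\subseteq U(S)$) and the reverse inequality from the bounding clauses built into (14), both times finishing with Observation~\ref{obs1}. Your exposition with the abbreviations $S$, $S^*$ and the explicit identification of which clause of (14) is used for each operator is in fact a bit cleaner than the paper's, but the argument is the same.
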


\begin{proof}
Let $t\in T$. If $s\mathrel Rt$ then
\begin{align*}
P(q)(t) & =\Min U(\{q(u)\mid u\mathrel Rt\})\geq q(s), \\
F(q)(s) & =\Min U(\{q(u)\mid s\mathrel Ru\})\geq q(t), \\
H(q)(t) & =\Max L(\{q(u)\mid u\mathrel Rt\})\leq q(s), \\
G(q)(s) & =\Max L(\{q(u)\mid s\mathrel Ru\})\leq q(t).
\end{align*}
This shows $R\subseteq R^*$. Now
\begin{align*}
P^*(q)(s) & =\Min U(\{q(t)\mid t\mathrel{R^*}s\})\subseteq U(\{q(t)\mid t\mathrel{R^*}s\})\subseteq U(\{q(t)\mid t\mathrel Rs\}), \\
F^*(q)(s) & =\Min U(\{q(t)\mid s\mathrel{R^*}t\})\subseteq U(\{q(t)\mid s\mathrel{R^*}t\})\subseteq U(\{q(t)\mid s\mathrel Rt\}), \\
H^*(q)(s) & =\Max L(\{q(t)\mid t\mathrel{R^*}s\})\subseteq L(\{q(t)\mid t\mathrel{R^*}s\})\subseteq L(\{q(t)\mid t\mathrel Rs\}), \\
G^*(q)(s) & =\Max L(\{q(t)\mid s\mathrel{R^*}t\})\subseteq L(\{q(t)\mid s\mathrel{R^*}t\})\subseteq L(\{q(t)\mid s\mathrel Rt\})
\end{align*}
and hence
\begin{align*}
  P(q)(s) & =\Min U(\{q(t)\mid t\mathrel Rs\})\leq_2P^*(q)(s), \\
  F(q)(s) & =\Min U(\{q(t)\mid s\mathrel Rt\})\leq_2F^*(q)(s), \\
H^*(q)(s) & \leq_1\Max L(\{q(t)\mid t\mathrel Rs\})=H(q)(s), \\
G^*(q)(s) & \leq_1\Max L(\{q(t)\mid s\mathrel Rt\})=G(q)(s)
\end{align*}
according to Observation~\ref{obs1}. Moreover, we have 
\begin{align*}
   q(t) & \leq P(q)(s)\text{ for all }t\text{ with }t\mathrel{R^*}s, \\
   q(t) & \leq F(q)(s)\text{ for all }t\text{ with }s\mathrel{R^*}t, \\
H(q)(s) & \leq q(t)\text{ for all }t\text{ with }t\mathrel{R^*}s, \\
G(q)(s) & \leq q(t)\text{ for all }t\text{ with }s\mathrel{R^*}t
\end{align*}
and hence
\begin{align*}
P(q)(s) & \in U(\{q(t)\mid t\mathrel{R^*}s\}), \\
F(q)(s) & \in U(\{q(t)\mid s\mathrel{R^*}t\}), \\
H(q)(s) & \in L(\{q(t)\mid t\mathrel{R^*}s\}), \\
G(q)(s) & \in L(\{q(t)\mid s\mathrel{R^*}t\})
\end{align*}
whence
\begin{align*}
P^*(q)(s) & =\Min U(\{q(t)\mid t\mathrel{R^*}s\})\leq_2P(q)(s), \\
F^*(q)(s) & =\Min U(\{q(t)\mid s\mathrel{R^*}t\})\leq_2F(q)(s), \\
H(q)(s) & \leq_1\Max L(\{q(t)\mid t\mathrel{R^*}s\})=H^*(q)(s), \\
G(q)(s) & \leq_1\Max L(\{q(t)\mid s\mathrel{R^*}t\})=G^*(q)(s)
\end{align*}
according to Observation~\ref{obs1}.
\end{proof}

Now we present another construction of a time preference relation on the time set.

Define a binary relation $\bar R$ on $T$ as follows:
\begin{enumerate}
\item[(15)] $s\mathrel{\bar R}t$ if both $(G*X)(q)(s)\leq X(q)(t)\leq(F*X)(q)(s)$ and \\
$(H*X)(q)(t)\leq X(q)(s)\leq(P*X)(q)(t)$ \\
hold for each $X\in\{P,F,H,G\}$ and each $q\in A^T$.
\end{enumerate}

\begin{theorem}
Let $(A,\leq,{}',0,1)$ be a finite orthomodular poset, $(T,R)$ a time frame and $P$, $F$, $H$ and $G$ the tense operators constructed by $(T,R)$. Let $\bar R$ be the relation defined by {\rm(15)}. Then $R\subseteq\bar R$ and for each $X,Y\in\{P,F,H,G\}$, each $q\in A^T$ and each $s\in T$ we have
\[
(Y*X)(q)(s)\approx_i(\bar Y*X)(q)(s)
\]
where $i=1$ if $Y\in\{H,G\}$ and $i=2$ otherwise, and where $\bar Y$ denotes the corresponding tense operator constructed by $(T,\bar R)$.
\end{theorem}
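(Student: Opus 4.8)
The plan is to mirror the proof of the preceding theorem, replacing the bare operators by their $*$-compositions and using Proposition~\ref{prop2}(i) to rewrite every composite in closed form as a $\Min U$ or a $\Max L$ of a union of the sets $X(q)(u)$.

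First I would prove $R\subseteq\bar R$. Fix $s,t\in T$ with $s\mathrel Rt$ together with $X\in\{P,F,H,G\}$ and $q\in A^T$. By Proposition~\ref{prop2}(i) the composites $(G*X)(q)(s)$ and $(F*X)(q)(s)$ equal $\Max L$ and $\Min U$ of $\bigcup\{X(q)(u)\mid s\mathrel Ru\}$, while $(H*X)(q)(t)$ and $(P*X)(q)(t)$ equal $\Max L$ and $\Min U$ of $\bigcup\{X(q)(u)\mid u\mathrel Rt\}$. Since $s\mathrel Rt$, the set $X(q)(t)$ occurs inside the first union and $X(q)(s)$ inside the second; as every element of a $\Max L$ is a lower bound and every element of a $\Min U$ an upper bound of the whole union, the four inequalities required by {\rm(15)} drop out immediately. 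This step uses no reflexivity of $R$.

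For the equivalence I would fix $X,Y\in\{P,F,H,G\}$, $q\in A^T$ and $s\in T$ and abbreviate $x:=X(q)\in(2^A\setminus\{\emptyset\})^T$, so that $(Y*X)(q)(s)=Y\big(\varphi(x)\big)(s)$ and $(\bar Y*X)(q)(s)=\bar Y\big(\varphi(x)\big)(s)$; Proposition~\ref{prop2}(i) again expresses both as $\Min U$ or $\Max L$ (according as $Y\in\{P,F\}$ or $Y\in\{H,G\}$) of the union of the $x(u)$ over the $R$- resp.\ $\bar R$-neighbours of $s$. It suffices to treat the case $Y=P$, the cases $F,H,G$ being symmetric and the two $H,G$ cases merely interchanging $\Min U$ with $\Max L$, $U$ with $L$, and $\leq_2$ with $\leq_1$. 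One inequality comes straight from $R\subseteq\bar R$: the $\bar R$-union contains the $R$-union, so its upper cone is contained in that of the $R$-union, whence $(\bar Y*X)(q)(s)=\Min U\big(\bigcup\{x(t)\mid t\mathrel{\bar R}s\}\big)\subseteq U\big(\bigcup\{x(t)\mid t\mathrel Rs\}\big)$, and Observation~\ref{obs1} yields $(Y*X)(q)(s)\leq_2(\bar Y*X)(q)(s)$.

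The reverse inequality is the step where the definition {\rm(15)} of $\bar R$ is genuinely used, and it is the crux. Specializing the defining condition of $t\mathrel{\bar R}s$ to the particular operator $W=X$ gives $X(q)(t)\leq(P*X)(q)(s)$ for every $t$ with $t\mathrel{\bar R}s$; hence every element of $(Y*X)(q)(s)=(P*X)(q)(s)$ is an upper bound of $\bigcup\{x(t)\mid t\mathrel{\bar R}s\}$, i.e.\ $(Y*X)(q)(s)\subseteq U\big(\bigcup\{x(t)\mid t\mathrel{\bar R}s\}\big)$. Observation~\ref{obs1} then gives $(\bar Y*X)(q)(s)=\Min U\big(\bigcup\{x(t)\mid t\mathrel{\bar R}s\}\big)\leq_2(Y*X)(q)(s)$, and combining the two inequalities yields $(Y*X)(q)(s)\approx_2(\bar Y*X)(q)(s)$. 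The only real obstacle is bookkeeping: keeping the two time points $s,t$ and the orientation of $\bar R$ aligned, and extracting from the conjunction in {\rm(15)} exactly the clause (with $W=X$) matching the chosen $Y$, so that the $\leq_1$/$\leq_2$ directions come out consistently.
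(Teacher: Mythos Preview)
Your proposal is correct and follows essentially the same route as the paper's own proof: both first verify $R\subseteq\bar R$ by reading off the four inequalities in {\rm(15)} from the closed forms $(Y*X)(q)(s)=\Min U$ or $\Max L$ of $\bigcup\{X(q)(u)\mid\ldots\}$, then obtain one half of $\approx_i$ from $R\subseteq\bar R$ via the inclusion of unions and Observation~\ref{obs1}, and the other half by reading the appropriate clause of {\rm(15)} back to get $(Y*X)(q)(s)\subseteq U(\ldots)$ resp.\ $L(\ldots)$ over the $\bar R$-neighbours. Your write-up is in fact slightly cleaner than the paper's in one spot: where the paper writes $(P*X)(q)(s)\in U(\ldots)$ you correctly write $\subseteq$, since $(P*X)(q)(s)$ is a subset of $A$, not an element.
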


\begin{proof}
Let $X\in\{P,F,H,G\}$, $q\in A^T$ and $s,t\in T$. If $s\mathrel Rt$ then
\begin{align*}
(P*X)(q)(t) & =\Min U(\bigcup\{X(q)(u)\mid u\mathrel Rt\})\geq X(q)(s), \\
(F*X)(q)(s) & =\Min U(\bigcup\{X(q)(u)\mid s\mathrel Ru\})\geq X(q)(t), \\
(H*X)(q)(t) & =\Max L(\bigcup\{X(q)(u)\mid u\mathrel Rt\})\leq X(q)(s), \\
(G*X)(q)(s) & =\Max L(\bigcup\{X(q)(u)\mid s\mathrel Ru\})\leq X(q)(t).
\end{align*}
This shows $R\subseteq\bar R$. Now
\begin{align*}
(\bar P*X)(q)(s) & =\Min U(\bigcup\{X(q)(t)\mid t\mathrel{\bar R}s\})\subseteq U(\bigcup\{X(q)(t)\mid t\mathrel{\bar R}s\})\subseteq \\
                 & \subseteq U(\bigcup\{X(q)(t)\mid t\mathrel Rs\}), \\
(\bar F*X)(q)(s) & =\Min U(\bigcup\{X(q)(t)\mid s\mathrel{\bar R}t\})\subseteq U(\bigcup\{X(q)(t)\mid s\mathrel{\bar R}t\})\subseteq \\
                 & \subseteq U(\bigcup\{X(q)(t)\mid s\mathrel Rt\}), \\
(\bar H*X)(q)(s) & =\Max L(\bigcup\{X(q)(t)\mid t\mathrel{\bar R}s\})\subseteq L(\bigcup\{X(q)(t)\mid t\mathrel{\bar R}s\})\subseteq \\
                 & \subseteq L(\bigcup\{X(q)(t)\mid t\mathrel Rs\}), \\
(\bar G*X)(q)(s) & =\Max L(\bigcup\{X(q)(t)\mid s\mathrel{\bar R}t\})\subseteq U(\bigcup\{X(q)(t)\mid s\mathrel{\bar R}t\})\subseteq \\
                 & \subseteq U(\bigcup\{X(q)(t)\mid s\mathrel Rt\})
\end{align*}
and hence
\begin{align*}
     (P*X)(q)(s) & =\Min U(\bigcup\{X(q)(t)\mid t\mathrel Rs\})\leq_2(\bar P*X)(q)(s), \\
     (F*X)(q)(s) & =\Min U(\bigcup\{X(q)(t)\mid s\mathrel Rt\})\leq_2(\bar F*X)(q)(s), \\
(\bar H*X)(q)(s) & \leq_1\Max L(\bigcup\{X(q)(t)\mid t\mathrel Rs\})=(H*X)(q)(s), \\
(\bar G*X)(q)(s) & \leq_1\Max L(\bigcup\{X(q)(t)\mid s\mathrel Rt\})=(G*X)(q)(s)
\end{align*}
according to Observation~\ref{obs1}. Moreover, we have 
\begin{align*}
    X(q)(t) & \leq(P*X)(q)(s)\text{ for all }t\text{ with }t\mathrel{\bar R}s, \\
    X(q)(t) & \leq(F*X)(q)(s)\text{ for all }t\text{ with }s\mathrel{\bar R}t, \\
(H*X)(q)(s) & \leq X(q)(t)\text{ for all }t\text{ with }t\mathrel{\bar R}s, \\
(G*X)(q)(s) & \leq X(q)(t)\text{ for all }t\text{ with }s\mathrel{\bar R}t
\end{align*}
and hence
\begin{align*}
(P*X)(q)(s) & \in U(\bigcup\{X(q)(t)\mid t\mathrel{\bar R}s\}), \\
(F*X)(q)(s) & \in U(\bigcup\{X(q)(t)\mid s\mathrel{\bar R}t\}), \\
(H*X)(q)(s) & \in L(\bigcup\{X(q)(t)\mid t\mathrel{\bar R}s\}), \\
(G*X)(q)(s) & \in L(\bigcup\{X(q)(t)\mid s\mathrel{\bar R}t\})
\end{align*}
whence
\begin{align*}
(\bar P*X)(q)(s) & =\Min U(\bigcup\{X(q)(t)\mid t\mathrel{\bar R}s\})\leq_2(P*X)(q)(s), \\
(\bar F*X)(q)(s) & =\Min U(\bigcup\{X(q)(t)\mid s\mathrel{\bar R}t\})\leq_2(F*X)(q)(s), \\
(\bar H*X)(q)(s) & \leq_1\Max L(\bigcup\{X(q)(t)\mid t\mathrel{\bar R}s\})=(\bar H*X)(q)(s), \\
(\bar G*X)(q)(s) & \leq_1\Max L(\bigcup\{X(q)(t)\mid s\mathrel{\bar R}t\})=(\bar G*X)(q)(s)
\end{align*}
according to Observation~\ref{obs1}.
\end{proof}

Authors' addresses:

Ivan Chajda \\
Palack\'y University Olomouc \\
Faculty of Science \\
Department of Algebra and Geometry \\
17.\ listopadu 12 \\
771 46 Olomouc \\
Czech Republic \\
ivan.chajda@upol.cz

Helmut L\"anger \\
TU Wien \\
Faculty of Mathematics and Geoinformation \\
Institute of Discrete Mathematics and Geometry \\
Wiedner Hauptstra\ss e 8-10 \\
1040 Vienna \\
Austria, and \\
Palack\'y University Olomouc \\
Faculty of Science \\
Department of Algebra and Geometry \\
17.\ listopadu 12 \\
771 46 Olomouc \\
Czech Republic \\
helmut.laenger@tuwien.ac.at
\end{document}